\documentclass[a4paper,12pt]{amsart}
\usepackage{amssymb,amscd,amsmath,mathrsfs}

\usepackage{mathptmx}
\usepackage{dsfont}

\usepackage{latexsym}
\usepackage{tensor}
\usepackage{epsfig}
\usepackage{graphicx}
\usepackage[all]{xy}
\usepackage{graphics}
\usepackage{color}
\usepackage{tikz}


\pagestyle{plain}

\theoremstyle{plain}
\newtheorem{theorem}{Theorem}[section]
\newtheorem{proposition}[theorem]{Proposition}
\newtheorem{corollary}[theorem]{Corollary}
\newtheorem{lemma}[theorem]{Lemma}

\theoremstyle{definition}

\newtheorem{remark}[theorem]{Remark}

\numberwithin{equation}{section}

\newcommand{\norm}[1]{\left\Vert{#1}\right\Vert}
\newcommand{\abs}[1]{\left\vert{#1}\right\vert}

\newcommand{\Natural}{\mathds{N}}
\newcommand{\Integer}{\mathds{Z}}

\title{Homfly polynomials for periodic knots via state model}

\author{Joonoh Kim}
\address{Department of Mathematics, Pusan National University, Busan, Korea}

\author{Kyoung-Tark Kim}
\address{Sogang research team for discrete and geometric structures, Sogang University, Mapo-gu, Seoul 04107, Korea}

\begin{document}

\maketitle

\begin{abstract}
We give criteria for oriented links to be periodic of prime order using the quantum $\mathrm{SL}(N)$-invariant.
The criteria are based upon an observation on the linking number between a periodic knot and its axis of the rotation.
\end{abstract}

\section{Introduction}\label{sec:intro}
\noindent A link $L$ in $S^{3}$ is called \textit{$p$-periodic} ($p \in \Integer_{\geq 2}$) provided that there is a homeomorphism $g: S^{3} \rightarrow S^{3}$ of order $p
$ whose fixed point set $\gamma$ is $S^1$ on $S^3$ with $\gamma \cap L = \emptyset$.
(See \cite[Section 1]{Mu}.)

The \textit{homfly polynomial} $P_{L}(a,z) \in \Integer [a^{\pm 1},z^{\pm 1}]$ of an oriented link $L$ is known to be defined uniquely by the following recursive relation:
\begin{enumerate}
\item $P_{T_1}(a, z) = 1$;
\item $a^{-1}P_{L_{+}}(a, z) -a P_{L_{-}}(a,z)=zP_{L_{0}}(a,z)$,
\end{enumerate}
where $T_1$ is the trivial knot, and $L_{+}$, $L_{-}$, and $L_{0}$ are obtained from $L$ which are identical with $L$ except one given crossing as depicted in Figure~\ref{fig:pmz} (see \cite[Theorem 8.2.6, p. 105]{Kaw}).
The \textit{Jones polynomial} $V_{L}(t)$ of an oriented link $L$ can be obtained from $P_{L}(a,z)$ by substituting $a=t$ and $z = \sqrt{t}-\frac{1}{\sqrt{t}}$ (see \cite[Proposition 8.2.8, p. 106]{Kaw}).

The \textit{Kauffman polynomial} $F_L(a,z)\in \Integer[a^{\pm 1}, z^{\pm 1}]$ of an oriented link $L$ is defined by
\[ F_L(a,z) = a^{-w(D)} \Lambda_D(a,z), \]
where $D$ is a diagram of $L$, $w(D)$ is the writhe of $D$, and the polynomial $\Lambda_D(a,z)$ is a regular isotopy invariant of an unoriented link obtained by:
\begin{enumerate}
\item $\Lambda_{\tikz[scale=1,x=1pt,y=1pt] \draw (0,0) circle (3);}(a,z) = 1$;
\item $\Lambda_{\tikz[scale=1,x=1pt,y=1pt] \draw (0,8)--(8,0) (0,0)--(3,3) (5,5)--(8,8);} (a,z) + \Lambda_{\tikz[scale=1,x=1pt,y=1pt] \draw (0,0)--(8,8) (0,8)--(3,5) (5,3)--(8,0);} (a,z) = z (\Lambda_{\tikz[scale=1,x=1pt,y=1pt] \draw[rounded corners=1pt] (0,8)--(3,5)--(5,5)--(8,8) (0,0)--(3,3)--(5,3)--(8,0);}(a,z) + \Lambda_{\tikz[scale=1,x=1pt,y=1pt] \draw[rounded corners=1pt] (0,0)--(3,3)--(3,5)--(0,8) (8,0)--(5,3)--(5,5)--(8,8);}(a,z))$;
\item $\Lambda_{\tikz[scale=1,x= 1pt,y=1pt] \draw[rounded corners=2pt] (0,8)--(7,4)--(4,1)--(1,4)--(3,4) (6,7)--(8,8);}(a,z) = a \Lambda_{\tikz[scale=1,x= 1pt,y=1pt] \draw[rounded corners=2pt] (0,8)--(3,5)--(5,5)--(8,8);}(a,z)$;
\item $\Lambda_{\tikz[scale=1,x=1pt,y=1pt] \draw[rounded corners=2pt] (8,8)--(1,4)--(4,1)--(7,4)--(5,4) (2,7)--(0,8);}(a,z) = a^{-1} \Lambda_{\tikz[scale=1,x= 1pt,y=1pt] \draw[rounded corners=2pt] (0,8)--(3,5)--(5,5)--(8,8);}(a,z)$.
\end{enumerate}

The following are several known results for periodic knots or links.
They can be used as criteria for non-periodicity of links.

\begin{theorem}[Przytycki {\cite[Theorem 1.2, Theorem 1.4]{Pr3}}]\label{thm:Przytycki}
Let $p$ be a prime number and $L$ an oriented $p$-periodic link.
Then
\begin{enumerate}
\item $P_{L}(a,z)\equiv P_{L}(a^{-1},z) \mbox{ mod } (p, z^{p})$;
\item $F_{L}(a,z)\equiv F_{L}(a^{-1},z) \mbox{ mod } (p, z^{p})$.
\end{enumerate}
\end{theorem}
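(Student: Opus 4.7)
The plan is to give a direct skein-theoretic argument that exploits the $\Integer_{p}$-symmetry of a periodic diagram: after iterating the Homfly skein at one orbit of crossings, the non-symmetric contributions cancel in batches of $p$, and the remaining fully-smoothed correction is killed by $z^{p}$.

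First, since $L$ is $p$-periodic, one selects a diagram $D$ of $L$ on which the order-$p$ rotation $g$ acts as a rotation by $2\pi/p$ about the projection of the axis $\gamma$. Because $\gamma \cap L = \emptyset$ and $p$ is prime, $\Integer_{p}$ acts freely on the crossings of $D$, so every orbit has exactly $p$ crossings, all of the same sign. Fix one orbit $\mathcal{O}=\{c_{1},\dots,c_{p}\}$ of positive crossings and iterate the rearranged skein relation $P_{L_{+}} = a^{2}P_{L_{-}} + az\,P_{L_{0}}$ at each $c_{i}$; this produces
\[
P_{L}(a,z) \;=\; \sum_{S \subseteq \mathcal{O}} a^{2p-|S|}\,z^{|S|}\,P_{L_{S}}(a,z),
\]
where $L_{S}$ is $L$ with the crossings in $S$ smoothed and those in $\mathcal{O}\setminus S$ switched to negative. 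The rotation $g$ sends $L_{S}$ to $L_{g(S)}$ by an ambient isotopy, so $P_{L_{S}} = P_{L_{g(S)}}$. Since $p$ is prime, the $\Integer_{p}$-action on subsets of $\mathcal{O}$ has only $\emptyset$ and $\mathcal{O}$ as fixed points, while every other orbit has size exactly $p$. Grouping the sum by $\Integer_{p}$-orbits, each non-trivial orbit contributes $p$ copies of a common summand and hence vanishes modulo $p$, while the $S=\mathcal{O}$ term carries an overall factor $z^{p}$. This yields the one-orbit congruence $P_{L}\equiv a^{2p}P_{L_{\emptyset}} \pmod{(p,z^{p})}$; the dual skein gives the analogue with $a^{-2p}$ for orbits of negative crossings.

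Iterating this step across every orbit of $D$ flips every crossing and converts $L$ into its mirror image $L^{*}$, and combining with the standard mirror identity $P_{L^{*}}(a,z)=P_{L}(a^{-1},z)$ gives (i). The Kauffman statement (ii) follows by the same orbit-cancellation strategy applied to the Kauffman skein for $\Lambda_{D}$, using that the writhe of a $p$-periodic diagram is a multiple of $p$ so that the normalization $F_{L}=a^{-w(D)}\Lambda_{D}$ descends consistently modulo $p$. The main obstacle is the bookkeeping in the iterated step: tracking the cumulative $a^{\pm 2p}$ factors as one flips orbit after orbit, and controlling the fully-smoothed correction $z^{p}P_{L_{\mathcal{O}}}$, which need not vanish termwise modulo $z^{p}$ when $L_{\mathcal{O}}$ has more components than $L$. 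This is handled by an induction on the number of crossing-orbits, exploiting that $L_{\mathcal{O}}$ is itself $p$-periodic with strictly fewer crossings so that the inductive hypothesis applies.
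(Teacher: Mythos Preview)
The paper does not prove Theorem~\ref{thm:Przytycki}; it is quoted from \cite{Pr3} as background, with no argument supplied here. So there is no ``paper's own proof'' to compare against, and your sketch should be judged on its own merits and against Przytycki's original argument.

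Your orbit--cancellation idea is exactly the mechanism Przytycki uses, and the expansion
\[
P_{L}(a,z)=\sum_{S\subseteq\mathcal{O}}a^{\,2p-|S|}z^{|S|}P_{L_{S}}(a,z)
\]
together with the $\Integer_{p}$--action on subsets is correct. The difficulty is in what you yourself flag as ``the main obstacle'' and then wave away. First, the cumulative $a^{\pm 2p}$ factors: after flipping every orbit you obtain
\[
P_{L}(a,z)\;\equiv\;a^{2w(D)}\,P_{L^{*}}(a,z)\;=\;a^{2w(D)}\,P_{L}(a^{-1},z)\pmod{(p,z^{p})},
\]
and $a^{2w(D)}$ is \emph{not} congruent to $1$ modulo $(p,z^{p})$---the ideal imposes no relation among powers of $a$. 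Your induction on the number of orbits does nothing to remove this factor: each step of the induction introduces another $a^{\pm 2p}$, so the discrepancy only accumulates. Second, the fully--smoothed term $a^{p}z^{p}P_{L_{\mathcal{O}}}$ need not lie in $(z^{p})$, because $L_{\mathcal{O}}$ can have up to $p$ more components than $L$ and hence $P_{L_{\mathcal{O}}}$ may carry $z^{-k}$ with $k$ as large as $p-1$ (or larger after iterating). Saying ``$L_{\mathcal{O}}$ is $p$--periodic with fewer crossings, so the inductive hypothesis applies'' does not help: the inductive hypothesis asserts the $a\leftrightarrow a^{-1}$ symmetry of $P_{L_{\mathcal{O}}}$, not that $z^{p}P_{L_{\mathcal{O}}}$ vanishes modulo $(p,z^{p})$.

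In Przytycki's treatment these two issues are tied together and are handled not by passing to the mirror, but by comparing the periodic skein tree for $L$ with the skein tree for the \emph{quotient} link $\overline{L}=L/\Integer_{p}$; the relation $P_{L}\equiv (P_{\overline{L}})^{p}$ type congruence (after suitable interpretation) is what makes the $a$--powers and the negative $z$--powers cancel symmetrically. Your mirror--flip route, as written, does not reach the stated conclusion.
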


\begin{theorem}[Traczyk {\cite{Tr}}, {\cite[Exercise 10.1.6, p. 125]{Kaw}}]\label{thm:Traczyk}
Let $L$ be an oriented $p$-periodic link.
Then $V_{L}(t)\equiv V_{L}(t^{-1}) \mbox{ mod } (p,~ t^{p}-1)$.
\end{theorem}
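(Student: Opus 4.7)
The plan is to deduce Theorem~\ref{thm:Traczyk} directly from Theorem~\ref{thm:Przytycki}(i) by specializing the HOMFLY congruence to the Jones polynomial via the substitution $a=t$, $z=\sqrt{t}-1/\sqrt{t}$ recalled in the introduction. Under this substitution the left-hand side of $P_{L}(a,z)\equiv P_{L}(a^{-1},z)\pmod{(p,z^{p})}$ becomes $V_{L}(t)$ by definition, so the task reduces to (a) controlling the image of the ideal $(p,z^{p})$ and (b) recognizing the specialized right-hand side as $V_{L}(t^{-1})$.

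For (a), I would invoke the freshman's-dream identity: since $p$ is prime, all middle binomial coefficients $\binom{p}{k}$ vanish modulo $p$, and therefore
\[
\bigl(\sqrt{t}-1/\sqrt{t}\bigr)^{p}\equiv t^{p/2}-t^{-p/2}=t^{-p/2}(t^{p}-1)\pmod{p}.
\]
Since $t^{-p/2}$ is a unit, $z^{p}$ is carried into $(p,t^{p}-1)$, so Przytycki's congruence specializes to
\[
V_{L}(t)\equiv P_{L}\bigl(t^{-1},\sqrt{t}-1/\sqrt{t}\bigr)\pmod{(p,t^{p}-1)}.
\]

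For (b), by the same definition,
\[
V_{L}(t^{-1})=P_{L}\bigl(t^{-1},\sqrt{t^{-1}}-\sqrt{t}\bigr)=P_{L}\bigl(t^{-1},-(\sqrt{t}-1/\sqrt{t})\bigr),
\]
so the specialized right-hand side and $V_{L}(t^{-1})$ differ only by the sign change $z\mapsto -z$. Reconciling this is the step I expect to be the main obstacle. The natural tool is the HOMFLY component-parity symmetry $P_{L}(a,-z)=(-1)^{c(L)-1}P_{L}(a,z)$, where $c(L)$ denotes the number of components: when $c(L)$ is odd (in particular for knots) the extra sign disappears and the desired congruence follows immediately. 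When $c(L)$ is even, an additional analysis in the quotient ring $\Integer[t^{\pm 1/2}]/(p,t^{p}-1)$ is needed---exploiting the $\sqrt{t}^{\,c(L)-1}$ prefactor that $V_{L}$ carries in this case, together with the relation $(t^{p/2})^{2}\equiv 1$ available in the quotient ring---to absorb the stray sign and conclude.
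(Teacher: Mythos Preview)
The paper does not prove this theorem; it is quoted in the introduction as a known background result, with citations to Traczyk and Kawauchi, and no argument is supplied. So there is no ``paper's own proof'' to compare against, and your proposal must be judged on its own.

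Your route---specializing Przytycki's HOMFLY congruence (Theorem~\ref{thm:Przytycki}(i)) via $a=t$, $z=\sqrt{t}-1/\sqrt{t}$---is the natural one, and steps (a) and (b) are correct for links with an odd number of components (in particular for knots): the freshman's-dream computation sends $z^p$ into $(p,t^p-1)$, and the parity symmetry $P_L(a,-z)=(-1)^{c(L)-1}P_L(a,z)$ removes the sign in (b).

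For links with an even number of components, however, the obstacle you flag is real and your sketched fix does not close it. Your argument produces
\[
V_L(t)\;\equiv\;-\,V_L(t^{-1})\pmod{(p,\,t^p-1)},
\]
and you propose to absorb the $-1$ using $(t^{p/2})^2\equiv 1$ together with the $\sqrt{t}^{\,c(L)-1}$ prefactor. But in $\Integer[t^{\pm 1/2}]/(p,t^p-1)\cong\Integer_p[s^{\pm1}]/(s^{2p}-1)$ (with $s=t^{1/2}$), the element $t^{p/2}=s^p$ is a square root of $1$ which is \emph{not} equal to $\pm 1$; merely knowing $(s^p)^2=1$ gives no mechanism to turn $-V_L(t^{-1})$ into $+V_L(t^{-1})$. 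Concretely, writing $V_L(t)=\sqrt{t}\,W(t)$ with $W\in\Integer[t^{\pm1}]$, your congruence reads $tW(t)\equiv -W(t^{-1})$ while the target reads $tW(t)\equiv W(t^{-1})$, and for odd $p$ these are inequivalent unless $W\equiv 0$. So as written the proposal establishes the congruence only in the odd-component case; the even-component case requires a separate argument (or a more careful formulation of what the congruence means in $\Integer[t^{\pm 1/2}]$) that is not supplied here.
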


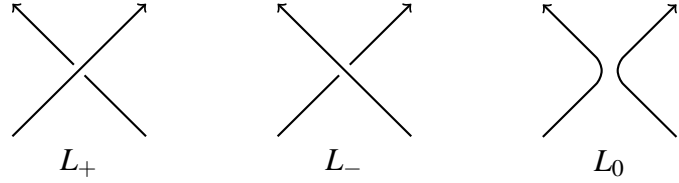
\begin{figure}
\begin{tabular}{ccccc}
\begin{tikzpicture}[scale=1,x=1pt,y=1pt]
\draw [->, thick] (0,0) -- (50,50);
\draw [thick] (50,0) -- (27,23);
\draw [->, thick] (23,27) -- (0,50);
\node at (25,-10) {$L_{+}$};
\end{tikzpicture}
&
\hspace{1cm}
&
\begin{tikzpicture}[scale=1,x=1pt,y=1pt]
\draw [->, thick] (50,0) -- (0,50);
\draw [thick] (0,0) -- (23,23);
\draw [->, thick] (27,27) -- (50,50);
\node at (25,-10) {$L_{-}$};
\end{tikzpicture}
&
\hspace{1cm}
&
\begin{tikzpicture}[scale=1,x=1pt,y=1pt]
\draw [->, thick, rounded corners=3pt] (0,0) -- (22,22) -- (22,28) -- (0,50);
\draw [->, thick, rounded corners=3pt] (50,0) -- (28,22) -- (28,28) -- (50,50);
\node at (25,-10) {$L_{0}$};
\end{tikzpicture}
\end{tabular}
\caption{Skein triple}
\label{fig:pmz}
\end{figure}

\begin{theorem}[Murasugi {\cite[Section 1]{Mu}}]\label{thm:Murasugi}
If $K$ is an oriented $p^r$-periodic knot ($p$ is a prime and $r\in \Natural$), then the Alexander polynomial $\Delta_K(t)$ of $K$ satisfies
\[ \Delta_{K}(t) \equiv f (t)^{p^{r}} (1 + t + t^{2}  + \cdots + t^{\lambda-1})^{p^{r}-1} \quad \mbox{ mod } p  \]
for some polynomial $f(t)$ and $\lambda \in \Integer_{>0}$ with $\mathrm{gcd}(\lambda, p)=1 $.
\end{theorem}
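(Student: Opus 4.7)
My plan is to use an equivariant Seifert-surface argument and reduce modulo $p$ at the end. Let $g$ be a generator of the $\Integer/p^r$-action on $S^3$ with fixed axis $\gamma$, write $K^{*} = K/\langle g\rangle$ and $\gamma^{*} = \gamma/\langle g\rangle$ for their images in the quotient $S^3/\langle g\rangle \cong S^3$, and set $\lambda = \abs{\mathrm{lk}(K, \gamma)}$; a short solid-torus computation shows $\mathrm{lk}(K^{*}, \gamma^{*}) = \mathrm{lk}(K, \gamma)$, so $\lambda$ is visible both upstairs and downstairs. The $p^r$-fold branched cover $S^3 \to S^3/\langle g\rangle$ along $\gamma^{*}$ recovers the triple $(S^3, K, \gamma)$ from $(S^3, K^{*}, \gamma^{*})$.

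I would proceed as follows. First, construct a $g$-invariant Seifert surface $\Sigma$ for $K$ by pulling back a Seifert surface $\Sigma^{*}$ for $K^{*}$ that meets $\gamma^{*}$ transversely in $\lambda$ points; then $\Sigma$ meets $\gamma$ in $\lambda$ fixed points of $g$. Second, choose a basis of $H_1(\Sigma;\Integer)$ adapted to the $g$-action so that the Seifert matrix $V$ becomes block-structured, with a fixed block encoding the Seifert matrix of $K^{*}$ and non-fixed blocks encoding how $g$ moves cycles around $\gamma$. Third, reduce modulo $p$: the identity $x^{p^r} - 1 \equiv (x-1)^{p^r}$ forces $H_1(\Sigma;\mathbb{F}_p)$ to be a unipotent $\mathbb{F}_p[\langle g\rangle]$-module, with a composition series whose subquotients yield, in the block determinant $\det(tV - V^T)$, exactly $p^r$ copies of $\Delta_{K^{*}}(t)$ (from the fixed block) and $p^r - 1$ copies of $1 + t + \cdots + t^{\lambda - 1}$ (from the non-fixed blocks, reflecting the winding number $\lambda$ of $K$ around $\gamma$). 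Assembling gives
\[
\Delta_K(t) \equiv \Delta_{K^{*}}(t)^{p^r}\,(1 + t + \cdots + t^{\lambda - 1})^{p^r - 1} \pmod{p},
\]
so taking $f(t) = \Delta_{K^{*}}(t)$ yields the stated form. The condition $\gcd(\lambda, p) = 1$ follows from requiring $K$ to be a single connected component of the preimage of $K^{*}$ under the $p^r$-fold branched cover: if $p \mid \lambda$, that preimage would split into several components.

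The main obstacle is the exponent bookkeeping in the third step, namely identifying the composition factors of the unipotent $\mathbb{F}_p[\langle g\rangle]$-module $H_1(\Sigma;\mathbb{F}_p)$ correctly and showing that each non-fixed layer contributes exactly $1 + t + \cdots + t^{\lambda - 1}$ rather than some other polynomial of degree $\lambda - 1$. A cleaner, essentially equivalent route is via the two-variable Alexander polynomial $\Delta_{K^{*}\cup\gamma^{*}}(x,y)$: a Mayberry--Murasugi-type product formula expresses $\Delta_K(t)$, up to a boundary correction, as $\prod_{\zeta^{p^r}=1}\Delta_{K^{*}\cup\gamma^{*}}(t,\zeta)$; the Frobenius identity $y^{p^r} - 1 = (y-1)^{p^r}$ in characteristic $p$ collapses this product to $\Delta_{K^{*}\cup\gamma^{*}}(t,1)^{p^r}$ modulo $p$, and the Torres formula $\Delta_{K^{*}\cup\gamma^{*}}(t,1) \doteq (1 + t + \cdots + t^{\lambda - 1})\Delta_{K^{*}}(t)$ then produces the claimed congruence after absorbing one factor of $1 + t + \cdots + t^{\lambda-1}$ into the boundary correction.
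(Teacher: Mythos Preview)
The paper does not give its own proof of this statement: Theorem~\ref{thm:Murasugi} is quoted in the introduction as one of ``several known results for periodic knots or links'' and is attributed to Murasugi \cite{Mu} without any argument supplied. So there is nothing in the paper to compare your proposal against.

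That said, your second route is essentially Murasugi's original proof, and your sketch of it is accurate in outline. The relevant covering formula is
\[
\Delta_K(t) \doteq \Delta_{K^{*}}(t)\prod_{j=1}^{p^r-1}\Delta_{K^{*}\cup\gamma^{*}}(t,\omega^j),
\]
with $\omega$ a primitive $p^r$-th root of unity; the product on the right is an integer polynomial in $t$, and reducing modulo $p$ collapses it to $\Delta_{K^{*}\cup\gamma^{*}}(t,1)^{p^r-1}$, after which Torres gives $\Delta_{K^{*}\cup\gamma^{*}}(t,1)\doteq(1+t+\cdots+t^{\lambda-1})\Delta_{K^{*}}(t)$ and the exponents combine to $\Delta_{K^{*}}(t)^{p^r}(1+t+\cdots+t^{\lambda-1})^{p^r-1}$. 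Your remark that $\gcd(\lambda,p)=1$ follows from connectedness of $K$ upstairs is correct. The one place I would tighten your write-up is the phrase ``absorbing one factor of $1+t+\cdots+t^{\lambda-1}$ into the boundary correction'': with the product taken over $j\neq 0$ and the extra factor $\Delta_{K^{*}}(t)$ already present, no such absorption is needed and the exponents come out directly.

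Your first route, via an equivariant Seifert surface, can also be made to work, but you are right that the identification of the non-fixed composition factors with $1+t+\cdots+t^{\lambda-1}$ is the real content and is not automatic from the module structure alone; one ends up redoing the two-variable computation in disguise. If you want a self-contained argument, the second route is both shorter and closer to what Murasugi actually did.
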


In the following theorem, Traczyk used the degree-zero term $P_0(a)$ in $z$ of $P_{K}(a,z) = \sum_{i \geq 0} P_{2i}(a) z^{2i}$ for periodic knots.

\begin{theorem}[Traczyk {\cite[Theorem 1.1]{Tr1}}, {\cite[Theorem 1 (a)]{Pr1}}]\label{thm:Traczyk}
If $K$ is a $p$-periodic knot ($p$ an odd prime) and the linking number of the periodic version of $K$ with the axis of the rotation is equal to $\lambda$, then in the polynomial $P_0(a) =\sum_{i \in \Integer} c_{2i}a^{2i}$ we have $c_{2i} \equiv c_{2i+2} \:\mathrm{mod}\: p$ for all $i \in \Integer$, except possibly when $2i + 1 \equiv \pm \lambda \mbox{ mod } p$.
\end{theorem}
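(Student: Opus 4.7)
The plan is to use the $\Integer/p$-symmetry of a periodic diagram of $K$, together with a state-sum representation of the HOMFLY polynomial, to reduce the statement to a mod-$p$ identity in the HOMFLY skein module of the complement of the axis, then specialize at $z=0$.

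Fix a $p$-periodic diagram $D$ of $K$ invariant under the rotation $g$ of order $p$ about $\gamma$, and let $\bar D$ be its quotient diagram in $(S^3\setminus\gamma)/\langle g\rangle$, which is again a solid torus; call it $V$. The winding of $\bar D$ around the core of $V$ equals $\lambda=\mathrm{lk}(K,\gamma)$. Iterating the HOMFLY skein relation on the crossings of $D$, organized so that all crossings in a common $\langle g\rangle$-orbit are resolved simultaneously, expresses $P_K(a,z)$ as a state sum whose states carry a $\Integer/p$-action. Since $\gamma\cap K=\emptyset$, the action on crossings of $D$ is free, so every crossing-orbit has size exactly $p$. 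States not fixed by $g$ therefore appear in $\Integer/p$-orbits of equal weight, and their total contribution vanishes mod $p$. What remains mod $p$ are the $g$-fixed states, which descend to states on $\bar D$ in $V$; this yields a congruence
\[
P_K(a,z)\equiv\mathrm{cl}\bigl(\Phi_{\bar D,\lambda}\bigr)(a,z)\pmod p,
\]
where $\Phi_{\bar D,\lambda}$ is a well-defined element of the HOMFLY skein module $\mathcal S(V)$ concentrated in the $\lambda$-winding sector, and $\mathrm{cl}$ is the closure induced by $V\hookrightarrow S^3$.

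The skein module $\mathcal S(V)$ is a polynomial algebra over $\Integer[a^{\pm1},z^{\pm1}]$ with generators indexed by nonzero winding, and $\mathrm{cl}$ admits an explicit Laurent description on this basis. Expanding $\Phi_{\bar D,\lambda}$ in the basis, specializing at $z=0$, and reducing mod $p$, the coefficient sequence $(c_{2i})$ of $P_K^0(a)\bmod p$ becomes an explicit function of $i$ and $\lambda$. A Chebyshev-type calculation mod $p$ in this polynomial ring then shows that $c_{2i+2}\equiv c_{2i}\pmod p$ at every position except the two ``singular'' exponents at which the $\lambda$-winding closure jumps; these two exceptional positions are precisely $2i+1\equiv\pm\lambda\pmod p$.

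The main obstacle will be making the closure map on $\mathcal S(V)$ sufficiently explicit at $z=0$ to pin down the two exceptional exponents as $\pm\lambda\pmod p$. A secondary but real subtlety is the $z^{-1}$-pole contributions from multi-component intermediate links during the skein recursion; the cleanest remedy is to carry out the entire argument inside $\mathcal S(V)$, where every skein element is polynomial, and only take the closure and specialize $z=0$ at the very end, at which point the exceptional exponents emerge naturally from the mod-$p$ basis expansion.
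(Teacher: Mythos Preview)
The paper does not give a proof of this theorem; it is quoted from the literature (Traczyk \cite{Tr1} and Przytycki \cite{Pr1}) as background, so there is no ``paper's own proof'' to compare against. What can be said is that your outline is essentially the strategy of Przytycki's elementary proof in \cite{Pr1}: pass to the HOMFLY skein module $\mathcal S(V)$ of the solid torus complementary to the axis, use the free $\Integer/p$-action on the resolution tree of the periodic diagram to kill non-invariant states mod $p$, and read off the conclusion from the closure map on the $\lambda$-winding sector.

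That said, as written the proposal has a real gap. The sentence ``A Chebyshev-type calculation mod $p$ \ldots\ then shows that $c_{2i+2}\equiv c_{2i}\pmod p$ at every position except the two singular exponents'' is precisely the content of the theorem, and you have not indicated how it goes. Concretely, you need to (i) say what the $z=0$ specialization of $\mathcal S(V)$ looks like and what basis you are using in the $\lambda$-sector, (ii) compute the closure $\mathrm{cl}$ on that basis explicitly as a Laurent polynomial in $a$, and (iii) verify that, after raising to the $p$-th power (the effect of lifting the quotient state to the $p$-fold cover) and reducing mod $p$, the only places where consecutive even-degree coefficients can fail to agree are $2i+1\equiv\pm\lambda\pmod p$. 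In Przytycki's argument this step is done by an explicit recursion on the basis elements $A_k$ of $\mathcal S(V)$ and the identity $\mathrm{cl}(A_k)=\frac{a^{-k}-a^{k}}{a^{-1}-a}$ at $z=0$ (whence the ``Chebyshev'' flavor); without something at that level of concreteness, the location of the exceptional exponents is not established. A smaller point: ``resolving all crossings in an orbit simultaneously'' is imprecise --- what you want is that the full binary resolution tree (switch vs.\ smooth at each crossing) carries a free $\Integer/p$-action on non-invariant leaves, so their weighted contributions cancel mod $p$; the surviving invariant leaves are exactly the lifts of resolutions of $\bar D$, with weights raised to the $p$-th power. Your remark about handling the $z^{-1}$ pole by staying in $\mathcal S(V)$ until the end is correct and is indeed how \cite{Pr1} proceeds.
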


\begin{remark}
The paper \cite{Tr1} used a different skein relation in \cite{LiMi}, namely,
\[ l P_{L_{+}}(l, m) + l^{-1} P_{L_{-}}(l,m) + mP_{L_{0}}(l,m) = 0. \]
Theorem~\ref{thm:Traczyk} is a modified version \cite{Pr1} in conformity with our definition of $P_L(a,z)$ in variables $a$ and $z$.
\end{remark}

The \textit{quantum $\mathrm{SL}(N)$-invariant $\mathcal{P}^{(N)}_L(q)$} of an oriented link $L$ (see \cite[Eq. (3), p. 5]{GS} and \cite[Remark in p. 171]{Ka}) is defined by
\begin{equation}\label{eq:defofqi}
\mathcal{P}^{(N)}_L(q) := \frac{q^{N}-q^{-N}}{q-q^{-1}} P_{L}(q^{-N}, q-q^{-1}) \in \Integer[q^{\pm 1}].
\end{equation}
In this paper we give criteria for periodic links using the quantum $\mathrm{SL}(N)$-invariant $\mathcal{P}^{(N)}_{L}(q)$ of an oriented link $L$.

\begin{figure}
\begin{tabular}{ccc}
\begin{tikzpicture}[scale=1,x=1pt,y=1pt]
\draw [->, thick] (0,0) -- (50,50);
\draw [thick] (50,0) -- (27,23);
\draw [->, thick] (23,27) -- (0,50);
\node at (25,38) {$+1$};
\end{tikzpicture}
&
\hspace{1.5cm}
&
\begin{tikzpicture}[scale=1,x=1pt,y=1pt]
\draw [->, thick] (50,0) -- (0,50);
\draw [thick] (0,0) -- (23,23);
\draw [->, thick] (27,27) -- (50,50);
\node at (25,38) {$-1$};
\end{tikzpicture}

\end{tabular}
\caption{Crossing signs}
\label{fig:cs}
\end{figure}
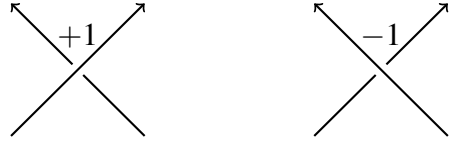

\section{Quantum $\mathrm{SL}(N)$-invariant and its quiver state model}\label{sec:suq}
\noindent Throughout the paper, the letters $L$ and $D$ always stand for a link and its diagram, respectively.

We recall a \textit{Yang--Baxter state model} of $\mathcal{P}^{(N)}_L(q)$ for an oriented link $L$.
(See e.g. \cite[Remark in p. 171]{Ka} for detailed theory.)
For $N \in \Integer_{\geq 2}$, we set
\[ \mathcal{I}_{N} := \{-N+1, -N+3, \ldots, N-3, N-1\}. \quad (\abs{\mathcal{I}_N} = N) \]
Then we obtain the following description of $\mathcal{P}^{(N)}_L(q)$:
\[ \mathcal{P}^{(N)}_L(q) = q^{-w(D)\cdot N}\langle D \rangle. \]
Here $w(D)$ is the \textit{writhe} of $D$ ($=$the sum of \textit{crossing signs} in $D$ as shown in Figure~\ref{fig:cs}) and $\langle D \rangle$ is defined by
\[ \langle D \rangle := \sum_{\sigma}\langle D | \sigma\rangle q^{\|\sigma\|}, \quad (\sigma \mbox{ runs over all states on } D) \]
where a \textit{state} $\sigma$, the product $\langle D|\sigma\rangle$ of \textit{vertex weights}, and the \textit{state norm} $\|\sigma\|$ will be explained now.

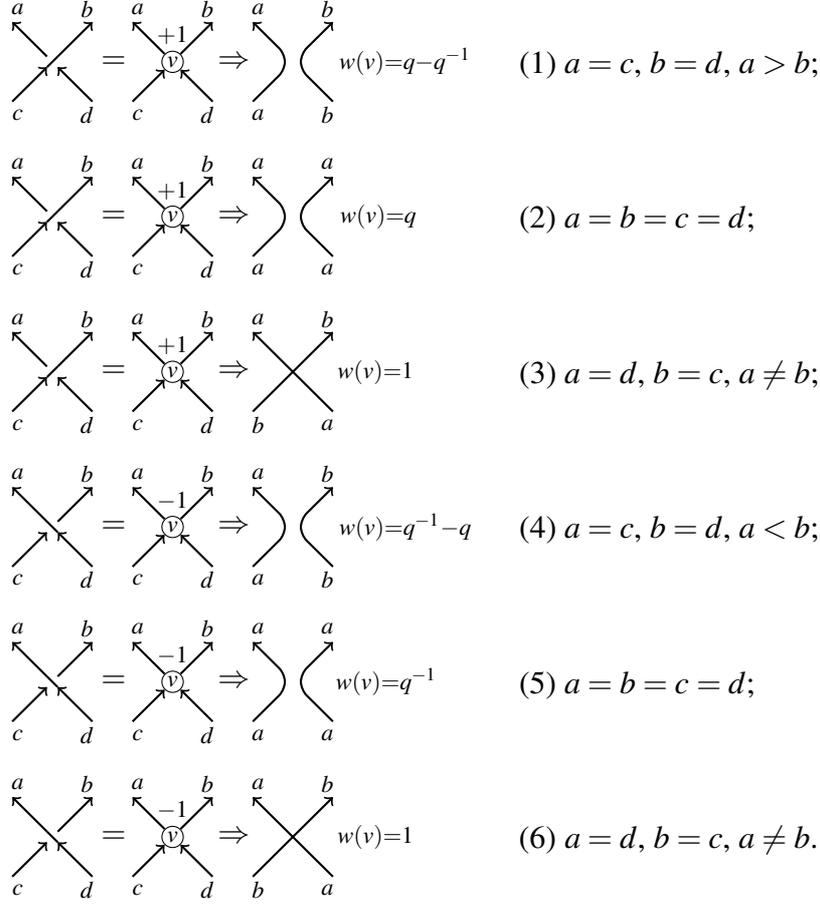
\begin{figure}
\begin{tabular}{ll}

\begin{tikzpicture}[scale=1,x=1pt,y=1pt]
\draw [->, thick] (13,17) -- (0,30);
\draw [->, thick] (17,17) -- (30,30);
\draw [->, thick] (0,0) -- (13,13);
\draw [->, thick] (30,0) -- (17,13);
\draw [-, thick] (13,13) -- (17,17);

\node at (2,35) {$\scriptstyle a$};
\node at (28,35) {$\scriptstyle b$};
\node at (2,-5) {$\scriptstyle c$};
\node at (28,-5) {$\scriptstyle d$};

\node at (38,15) {$=$};

\begin{scope}[xshift=45pt]
\draw (15,15) circle (4);
\draw [->, thick] (0,0) -- (12,12);
\draw [->, thick] (18,18) -- (30,30);
\draw [->, thick] (30,0) -- (18,12);
\draw [->, thick] (12,18) -- (0,30);
\node at (15,15) {$\scriptstyle v$};
\node at (15,25) {$\scriptstyle +1$};

\node at (2,35) {$\scriptstyle a$};
\node at (28,35) {$\scriptstyle b$};
\node at (2,-5) {$\scriptstyle c$};
\node at (28,-5) {$\scriptstyle d$};

\node at (38,15) {$\Rightarrow$};
\end{scope}

\begin{scope}[xshift=90pt]
\draw [->, thick, rounded corners=3pt] (0,0) -- (12,12) -- (12,18) -- (0,30);
\draw [->, thick, rounded corners=3pt] (30,0) -- (18,12) -- (18,18) -- (30,30);

\node at (2,35) {$\scriptstyle a$};
\node at (28,35) {$\scriptstyle b$};
\node at (2,-5) {$\scriptstyle a$};
\node at (28,-5) {$\scriptstyle b$};

\node at (57,15) {$\scriptstyle w(v) = q - q^{-1}$};
\end{scope}
\end{tikzpicture}

&

\begin{tikzpicture}[scale=1,x=1pt,y=1pt]
\node at (0,0) {};
\node at (0,22) {(1) $a=c$, $b=d$, $a>b$;};
\end{tikzpicture}

\\

\begin{tikzpicture}[scale=1,x=1pt,y=1pt]
\draw [->, thick] (13,17) -- (0,30);
\draw [->, thick] (17,17) -- (30,30);
\draw [->, thick] (0,0) -- (13,13);
\draw [->, thick] (30,0) -- (17,13);
\draw [-, thick] (13,13) -- (17,17);

\node at (2,35) {$\scriptstyle a$};
\node at (28,35) {$\scriptstyle b$};
\node at (2,-5) {$\scriptstyle c$};
\node at (28,-5) {$\scriptstyle d$};

\node at (38,15) {$=$};

\begin{scope}[xshift=45pt]
\draw (15,15) circle (4);
\draw [->, thick] (0,0) -- (12,12);
\draw [->, thick] (18,18) -- (30,30);
\draw [->, thick] (30,0) -- (18,12);
\draw [->, thick] (12,18) -- (0,30);
\node at (15,15) {$\scriptstyle v$};
\node at (15,25) {$\scriptstyle +1$};

\node at (2,35) {$\scriptstyle a$};
\node at (28,35) {$\scriptstyle b$};
\node at (2,-5) {$\scriptstyle c$};
\node at (28,-5) {$\scriptstyle d$};

\node at (38,15) {$\Rightarrow$};
\end{scope}

\begin{scope}[xshift=90pt]
\draw [->, thick, rounded corners=3pt] (0,0) -- (12,12) -- (12,18) -- (0,30);
\draw [->, thick, rounded corners=3pt] (30,0) -- (18,12) -- (18,18) -- (30,30);

\node at (2,35) {$\scriptstyle a$};
\node at (28,35) {$\scriptstyle a$};
\node at (2,-5) {$\scriptstyle a$};
\node at (28,-5) {$\scriptstyle a$};

\node at (47,15) {$\scriptstyle w(v) = q$};
\end{scope}
\end{tikzpicture}

&

\begin{tikzpicture}[scale=1,x=1pt,y=1pt]
\node at (0,0) {};
\node at (0,22) {(2) $a=b=c=d$;};
\end{tikzpicture}

\\

\begin{tikzpicture}[scale=1,x=1pt,y=1pt]
\draw [->, thick] (13,17) -- (0,30);
\draw [->, thick] (17,17) -- (30,30);
\draw [->, thick] (0,0) -- (13,13);
\draw [->, thick] (30,0) -- (17,13);
\draw [-, thick] (13,13) -- (17,17);

\node at (2,35) {$\scriptstyle a$};
\node at (28,35) {$\scriptstyle b$};
\node at (2,-5) {$\scriptstyle c$};
\node at (28,-5) {$\scriptstyle d$};

\node at (38,15) {$=$};

\begin{scope}[xshift=45pt]
\draw (15,15) circle (4);
\draw [->, thick] (0,0) -- (12,12);
\draw [->, thick] (18,18) -- (30,30);
\draw [->, thick] (30,0) -- (18,12);
\draw [->, thick] (12,18) -- (0,30);
\node at (15,15) {$\scriptstyle v$};
\node at (15,25) {$\scriptstyle +1$};

\node at (2,35) {$\scriptstyle a$};
\node at (28,35) {$\scriptstyle b$};
\node at (2,-5) {$\scriptstyle c$};
\node at (28,-5) {$\scriptstyle d$};

\node at (38,15) {$\Rightarrow$};
\end{scope}

\begin{scope}[xshift=90pt]
\draw [->, thick] (0,0) -- (30,30);
\draw [->, thick] (30,0) -- (0,30);

\node at (2,35) {$\scriptstyle a$};
\node at (28,35) {$\scriptstyle b$};
\node at (2,-5) {$\scriptstyle b$};
\node at (28,-5) {$\scriptstyle a$};

\node at (46,15) {$\scriptstyle w(v) = 1$};
\end{scope}
\end{tikzpicture}

&

\begin{tikzpicture}[scale=1,x=1pt,y=1pt]
\node at (0,0) {};
\node at (0,22) {(3) $a=d$, $b=c$, $a\neq b$;};
\end{tikzpicture}

\\

\begin{tikzpicture}[scale=1,x=1pt,y=1pt]
\draw [->, thick] (13,17) -- (0,30);
\draw [->, thick] (17,17) -- (30,30);
\draw [->, thick] (0,0) -- (13,13);
\draw [->, thick] (30,0) -- (17,13);
\draw [-, thick] (13,17) -- (17,13);

\node at (2,35) {$\scriptstyle a$};
\node at (28,35) {$\scriptstyle b$};
\node at (2,-5) {$\scriptstyle c$};
\node at (28,-5) {$\scriptstyle d$};

\node at (38,15) {$=$};

\begin{scope}[xshift=45pt]
\draw (15,15) circle (4);
\draw [->, thick] (0,0) -- (12,12);
\draw [->, thick] (18,18) -- (30,30);
\draw [->, thick] (30,0) -- (18,12);
\draw [->, thick] (12,18) -- (0,30);

\node at (15,15) {$\scriptstyle v$};
\node at (15,25) {$\scriptstyle -1$};

\node at (2,35) {$\scriptstyle a$};
\node at (28,35) {$\scriptstyle b$};
\node at (2,-5) {$\scriptstyle c$};
\node at (28,-5) {$\scriptstyle d$};

\node at (38,15) {$\Rightarrow$};
\end{scope}

\begin{scope}[xshift=90pt]
\draw [->, thick, rounded corners=3pt] (0,0) -- (12,12) -- (12,18) -- (0,30);
\draw [->, thick, rounded corners=3pt] (30,0) -- (18,12) -- (18,18) -- (30,30);

\node at (2,35) {$\scriptstyle a$};
\node at (28,35) {$\scriptstyle b$};
\node at (2,-5) {$\scriptstyle a$};
\node at (28,-5) {$\scriptstyle b$};

\node at (57,15) {$\scriptstyle w(v) = q^{-1} - q$};
\end{scope}
\end{tikzpicture}

&

\begin{tikzpicture}[scale=1,x=1pt,y=1pt]
\node at (0,0) {};
\node at (0,22) {(4) $a=c$, $b=d$, $a<b$;};
\end{tikzpicture}

\\

\begin{tikzpicture}[scale=1,x=1pt,y=1pt]
\draw [->, thick] (13,17) -- (0,30);
\draw [->, thick] (17,17) -- (30,30);
\draw [->, thick] (0,0) -- (13,13);
\draw [->, thick] (30,0) -- (17,13);
\draw [-, thick] (13,17) -- (17,13);

\node at (2,35) {$\scriptstyle a$};
\node at (28,35) {$\scriptstyle b$};
\node at (2,-5) {$\scriptstyle c$};
\node at (28,-5) {$\scriptstyle d$};

\node at (38,15) {$=$};

\begin{scope}[xshift=45pt]
\draw (15,15) circle (4);
\draw [->, thick] (0,0) -- (12,12);
\draw [->, thick] (18,18) -- (30,30);
\draw [->, thick] (30,0) -- (18,12);
\draw [->, thick] (12,18) -- (0,30);
\node at (15,15) {$\scriptstyle v$};
\node at (15,25) {$\scriptstyle -1$};

\node at (2,35) {$\scriptstyle a$};
\node at (28,35) {$\scriptstyle b$};
\node at (2,-5) {$\scriptstyle c$};
\node at (28,-5) {$\scriptstyle d$};

\node at (38,15) {$\Rightarrow$};
\end{scope}

\begin{scope}[xshift=90pt]
\draw [->, thick, rounded corners=3pt] (0,0) -- (12,12) -- (12,18) -- (0,30);
\draw [->, thick, rounded corners=3pt] (30,0) -- (18,12) -- (18,18) -- (30,30);

\node at (2,35) {$\scriptstyle a$};
\node at (28,35) {$\scriptstyle a$};
\node at (2,-5) {$\scriptstyle a$};
\node at (28,-5) {$\scriptstyle a$};

\node at (50,15) {$\scriptstyle w(v) = q^{-1}$};
\end{scope}
\end{tikzpicture}

&

\begin{tikzpicture}[scale=1,x=1pt,y=1pt]
\node at (0,0) {};
\node at (0,22) {(5) $a=b=c=d$;};
\end{tikzpicture}

\\

\begin{tikzpicture}[scale=1,x=1pt,y=1pt]
\draw [->, thick] (13,17) -- (0,30);
\draw [->, thick] (17,17) -- (30,30);
\draw [->, thick] (0,0) -- (13,13);
\draw [->, thick] (30,0) -- (17,13);
\draw [-, thick] (13,17) -- (17,13);

\node at (2,35) {$\scriptstyle a$};
\node at (28,35) {$\scriptstyle b$};
\node at (2,-5) {$\scriptstyle c$};
\node at (28,-5) {$\scriptstyle d$};

\node at (38,15) {$=$};

\begin{scope}[xshift=45pt]
\draw (15,15) circle (4);
\draw [->, thick] (0,0) -- (12,12);
\draw [->, thick] (18,18) -- (30,30);
\draw [->, thick] (30,0) -- (18,12);
\draw [->, thick] (12,18) -- (0,30);
\node at (15,15) {$\scriptstyle v$};
\node at (15,25) {$\scriptstyle -1$};

\node at (2,35) {$\scriptstyle a$};
\node at (28,35) {$\scriptstyle b$};
\node at (2,-5) {$\scriptstyle c$};
\node at (28,-5) {$\scriptstyle d$};

\node at (38,15) {$\Rightarrow$};
\end{scope}

\begin{scope}[xshift=90pt]
\draw [->, thick] (0,0) -- (30,30);
\draw [->, thick] (30,0) -- (0,30);

\node at (2,35) {$\scriptstyle a$};
\node at (28,35) {$\scriptstyle b$};
\node at (2,-5) {$\scriptstyle b$};
\node at (28,-5) {$\scriptstyle a$};

\node at (46,15) {$\scriptstyle w(v) = 1$};
\end{scope}
\end{tikzpicture}

&

\begin{tikzpicture}[scale=1,x=1pt,y=1pt]
\node at (0,0) {};
\node at (0,22) {(6) $a=d$, $b=c$, $a\neq b$.};
\end{tikzpicture}

\end{tabular}
\caption{Rules for splice and projection with vertex weights}
\label{fig:splcprojhomfly}
\end{figure}

Fix a diagram $D$ of $L$.
We first see that $D$ corresponds to a planar quiver ($=$directed multigraph) together with crossing data, say
\[ G(D) = \left( V(D), A(D), C(D) \right), \]
where the vertex set $V(D)$ is the set of crossings in $D$, the arrow set $A(D)$ is the set of oriented arcs in $D$, and $C(D)$ is the function $V(D) \rightarrow \{ \pm 1 \}$ for crossing signs given as in Figure~\ref{fig:cs}.
Note that every vertex in $V(D)$ has indegree $2$ and outdegree $2$.
Now fix $N \in \Integer_{\geq 2}$.
A \textit{state} (or an \textit{$N$-state}) on $D$ is a pair $(G(D), \eta)$ such that $\eta : A(D) \rightarrow \mathcal{I}_N$ is an arrow labeling (or coloring) function satisfying the following \emph{local condition}:
\begin{quotation}
For each $v \in V(D)$, its adjacent arrow-labels $a,b,c,d \in \mathcal{I}_N$ fulfill one of the six conditions in Figure~\ref{fig:splcprojhomfly}.
\end{quotation}
If $\sigma = (G(D), \eta)$ is a state on $D$, then we remove each vertex $v\in V(D)$ by the rules in Figure~\ref{fig:splcprojhomfly} so as to obtain the \textit{vertex weight} $w(v)$ of $v$.
The quantity $\langle D | \sigma \rangle$ is defined by
\[ \langle D | \sigma \rangle := \prod_{v \in V(D)} w(v) \in \Integer[q^{\pm 1}]. \]
If every vertices are removed according to the rules in Figure~\ref{fig:splcprojhomfly}, then we get a diagram of planar loops with some flat crossings (graphical crossings).
It turns out (easy to verify) that the resulting diagram consists entirely of \emph{simple closed} oriented loops, called \textit{component loops} of $\sigma$.
The \textit{norm} $\norm{\sigma}$ of $\sigma$ is now defined by
\[ \norm{\sigma} := \sum_{\ell} \mathrm{label}(\ell) \cdot \mathrm{rot}(\ell), \]
where the summation is over all component loops $\ell$ of $\sigma$, $\mathrm{label}(\ell)$ is the assigned (coherent) arrow-label for $\ell$, and $\mathrm{rot}(D)$ is defined by
\[ \mathrm{rot}(\ell) := \left\{
\begin{array}{ll}
  +1 & \mbox{ if the orientation of $\ell$ is counterclockwise;} \\
  -1 & \mbox{ if the orientation of $\ell$ is clockwise.} \\
\end{array}\right. \]
The definition of $\mathrm{rot}(\ell)$ makes sense because $\ell$ is simply closed.

\section{A congruence of $\mathcal{P}^{(N)}_L(q)$ for periodic links}\label{sec:congqpm1}
\noindent In what follows we occasionally use the next easy lemma.

\begin{lemma}\label{lem:exp}
Let $i,j \in \Integer$.
If $i \equiv j \mbox{ mod } p$, then $q^{i} \equiv q^{j} \mbox{ mod } (q^{p}-1)$.
\end{lemma}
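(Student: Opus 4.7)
The plan is to reduce the statement to the single divisibility
\[
(q^p - 1) \mid (q^{kp} - 1) \quad \text{in } \Integer[q^{\pm 1}] \text{ for every } k \in \Integer,
\]
and then apply this with $k = (i-j)/p$.

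First I would write the hypothesis as $i - j = kp$ for some $k \in \Integer$ and compute
\[
q^{i} - q^{j} = q^{j}\bigl(q^{kp} - 1\bigr).
\]
Since $q^{j}$ is a unit in $\Integer[q^{\pm 1}]$, it suffices to show that $q^{p} - 1$ divides $q^{kp} - 1$. For $k \geq 0$, this is immediate from the standard factorization
\[
x^{k} - 1 = (x - 1)\bigl(x^{k-1} + x^{k-2} + \cdots + x + 1\bigr)
\]
applied with $x = q^{p}$. For $k < 0$, set $m = -k > 0$ and write
\[
q^{kp} - 1 = -q^{kp}\bigl(q^{mp} - 1\bigr),
\]
which is again divisible by $q^{p} - 1$ by the case just handled, since $-q^{kp}$ is a unit in $\Integer[q^{\pm 1}]$.

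There is no real obstacle here; the lemma is a completely formal consequence of the unit structure of $\Integer[q^{\pm 1}]$ together with the geometric series identity. The only minor point to be careful about is that one is working in the Laurent polynomial ring rather than $\Integer[q]$, so the sign of $k$ must be separated to keep all factors inside $\Integer[q^{\pm 1}]$; this is dispatched by pulling out the unit $q^{kp}$ as above.
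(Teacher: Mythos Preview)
Your proof is correct and essentially identical to the paper's: both factor $q^{i}-q^{j}=q^{j}(q^{kp}-1)$ and then apply the geometric-series identity $x^{k}-1=(x-1)(x^{k-1}+\cdots+1)$ with $x=q^{p}$. The only cosmetic difference is that the paper assumes without loss of generality that $i>j$ (so $k>0$), whereas you treat the sign of $k$ explicitly by pulling out the unit $-q^{kp}$ when $k<0$.
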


\begin{proof}
Assume that $i>j$ with $i=j+kp$ for some positive integer $k$.
Then $q^{i}-q^{j} = ((q^{p})^{k}-1) q^{j} = (q^{p}-1)(q^{p(k-1)} + \cdots + q^{p}+ 1) q^{j}$.
\end{proof}

The following is our main congruence of $\mathcal{P}^{(N)}_L(q)$ for a periodic link.

\begin{theorem}\label{thm:maincongm}
Let $p$ be a prime.
Suppose that $L = \bigcup_{j=1}^{m} L_j $ is a $p$-periodic oriented link of $m$ components with diagram $D = \bigcup_{j=1}^{m} D_j$ and the axis $\gamma$ of the rotation.
Then we have
\[ \mathcal{P}^{(N)}_L(q) \equiv \sum_{\varphi \in \mathcal{I}_N^{m}} \prod_{j=1}^{m} q^{\lambda_j \cdot \varphi(j)} \quad \mbox{ mod } (p, q^p -1), \]
where $\mathcal{I}_N^{m}$ is the set of all functions from $\{ 1, \ldots, m \}$ into $\mathcal{I}_N$, and $\lambda_j$ is the linking number between $L_j$ and $\gamma$.
\end{theorem}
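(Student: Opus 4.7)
The plan is to apply the Yang--Baxter state sum
\[
\mathcal{P}^{(N)}_L(q) = q^{-w(D)N}\sum_\sigma \langle D|\sigma\rangle\, q^{\|\sigma\|}
\]
to a diagram $D$ of $L$ that is equivariant with respect to the periodicity. After isotoping so that $\gamma$ is a vertical axis and projecting vertically, we may assume $D$ is invariant under a planar rotation $g$ of order $p$ about a point $o \notin D$. Since $o$ lies off $D$, every orbit of crossings under $\langle g\rangle$ has size exactly $p$, and the crossings in such an orbit share the same sign; hence $w(D) \equiv 0 \pmod p$ and $q^{-w(D)N} \equiv 1 \pmod{(p, q^p - 1)}$. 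A non-fixed orbit of states has size $p$ with constant values of $\langle D|\sigma\rangle$ and $\|\sigma\|$, contributing a multiple of $p$; thus modulo $p$ only $g$-fixed states survive.

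For a $g$-fixed state $\sigma$, the $p$ vertices in each orbit $O$ carry identical weights, and Frobenius gives $\langle D|\sigma\rangle = \prod_O w_O^{\,p} \equiv \prod_O w_O(q^p) \pmod p$. Since $q^p \equiv 1 \pmod{q^p - 1}$, substitution yields $0$ for the recoupling types $(1)$, $(4)$ (whose weights $\pm(q - q^{-1})$ become $\pm(q^p - q^{-p})$) and $1$ for types $(2)$, $(3)$, $(5)$, $(6)$. Therefore, modulo $(p, q^p - 1)$, only fixed states with no recoupling crossing contribute, each with factor $q^{\|\sigma\|}$. Because every crossing of types $(2)$--$(6)$ preserves the label along each strand, such a state is determined by a function $\varphi : \{1, \dots, m\} \to \mathcal{I}_N$ on components, and $g$-invariance forces $\varphi \circ g = \varphi$, i.e., $\varphi$ is constant on each $\langle g\rangle$-orbit of components.

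For such a $\varphi$, set $L^{(c)} := \bigcup_{\varphi(j)=c} L_j$ with diagram $D^{(c)}$. Crossings inside a single $D^{(c)}$ are oriented (Seifert) splices while crossings between distinct $D^{(c)}$'s remain as flat projections, so the component loops of $\sigma$ are precisely the Seifert circles of each $D^{(c)}$ and
\[
\|\sigma\| = \sum_{c\in\mathcal{I}_N} c\cdot r_c, \qquad r_c := \sum_{\ell \in \mathrm{Seifert}(D^{(c)})} \mathrm{rot}(\ell).
\]
The core lemma is the congruence $r_c \equiv \mathrm{lk}(L^{(c)},\gamma) = \sum_{\varphi(j)=c}\lambda_j \pmod p$. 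I would prove it by identifying $\mathrm{lk}(L^{(c)},\gamma)$ with the winding number $\mathrm{wind}_o(D^{(c)})$, using that oriented smoothing preserves the $1$-cycle class in $H_1(\Real^2\setminus\{o\})$ to get $\mathrm{wind}_o(D^{(c)}) = \sum_\ell \mathrm{wind}_o(\ell)$; each Jordan Seifert circle satisfies $\mathrm{wind}_o(\ell) = \mathrm{rot}(\ell)$ when $o\in\mathrm{Int}(\ell)$ and $0$ otherwise, so $r_c - \mathrm{lk}(L^{(c)},\gamma) = \sum_{o\notin \mathrm{Int}(\ell)}\mathrm{rot}(\ell)$; and a Brouwer fixed-point argument shows that any $g$-invariant Jordan curve in the plane not passing through the unique fixed point $o$ must enclose $o$, forcing Seifert circles with $o\notin\mathrm{Int}(\ell)$ into $g$-orbits of size $p$ on which $\mathrm{rot}$ is constant, so the sum vanishes mod $p$. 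I expect this equivariant planar-topology step to be the main technical obstacle.

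Applying Lemma~\ref{lem:exp} then gives $\mathcal{P}^{(N)}_L(q) \equiv \sum_{\varphi\circ g = \varphi}\prod_j q^{\lambda_j\varphi(j)} \pmod{(p, q^p - 1)}$. To replace the restricted sum by the full sum over $\varphi \in \mathcal{I}_N^m$, factor both sides by $\langle g\rangle$-orbits of components. Size-$1$ orbits contribute identically; for a size-$p$ orbit the $\lambda_j$ share a common value $\lambda$ (as $g$ fixes $\gamma$ and preserves linking numbers), and both contributions reduce to $N$ modulo $(p, q^p-1)$: the restricted contribution is $\sum_c q^{p\lambda c} \equiv N$ directly, and the full contribution $\bigl(\sum_c q^{\lambda c}\bigr)^p \equiv \sum_c q^{p\lambda c} \equiv N$ by Frobenius. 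Multiplying across orbits yields the claimed formula.
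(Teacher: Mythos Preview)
Your argument is correct and follows essentially the same route as the paper: reduce to $g$-fixed states, kill the $\pm(q-q^{-1})$ weights via $(q-q^{-1})^{p}\equiv 0$, identify the surviving ``proper'' states with colorings $\varphi$ of the components, and compute $\|\sigma\|$ modulo $p$ by discarding the loops that do not enclose the axis (your Brouwer step makes explicit what the paper leaves implicit). The only difference worth noting is the last step: the paper passes from $g$-invariant $\varphi$ to all $\varphi\in\mathcal{I}_N^{m}$ by the simpler observation that a non-invariant $\varphi$ has a $g$-orbit of size $p$ on which $\sum_{j}\lambda_{j}\varphi(j)$ is constant (since $\lambda_{g(j)}=\lambda_{j}$), so those terms sum to $0$ modulo $p$; your Frobenius factorization over orbits is valid but more elaborate than necessary.
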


A criterion for non-periodicity is an immediate corollary:

\begin{corollary}\label{cor:crilinkm}
Suppose $p$ is a prime and $L = \bigcup_{j=1}^{m} L_j$ is an oriented link.
If
\[ \mathcal{P}^{(N)}_L(q) \not\equiv \sum_{\varphi \in \mathcal{I}_N^m} \prod_{j=1}^m q^{\psi(j) \cdot \varphi(j)} \quad \mbox{ mod } (p,q^p -1) \]
for any function $\psi : \{ 1, \ldots, m \} \rightarrow \{ 0, \ldots, p-1 \}$, then $L$ is not $p$-periodic.
\end{corollary}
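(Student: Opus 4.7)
The plan is to fix a $g$-invariant diagram $D$ of $L$ (arranging by isotopy that $g$ acts on $\Real^2$ as rotation by $2\pi/p$ about the projection $O$ of $\gamma$, with $O \notin D$) and to exploit the induced $\Integer/p$-action on all data entering $\langle D \rangle$. Because $g$ is orientation-preserving, it sends crossings to crossings of the same sign and arrows to arrows, and it acts on the set of $N$-states preserving both $\langle D|\sigma\rangle$ and $\|\sigma\|$. Since $O\notin D$, $g$ acts freely on the crossings and arrows of $D$; in particular $w(D) \equiv 0 \pmod p$, so the prefactor $q^{-w(D)N} \equiv 1 \pmod{q^p-1}$, and it suffices to compute $\langle D \rangle \pmod{(p, q^p-1)}$.

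Only $g$-fixed states matter modulo $p$, since state orbits of length $p$ contribute multiples of $p$. For a fixed state $\sigma$, vertex orbits in $V(D)$ also have size $p$, so $\langle D|\sigma\rangle = \prod_{[v]} w(v)^p$. Combining Lemma~\ref{lem:exp} with the mod-$p$ identity $(x-y)^p \equiv x^p - y^p$, one finds $(q-q^{-1})^p \equiv 0 \pmod{(p,q^p-1)}$ while $q^{\pm p} \equiv 1 \pmod{q^p-1}$. Hence modulo $(p,q^p-1)$ only fixed states with every vertex of types (2), (3), (5), (6) (no type-(1) or type-(4) vertices) contribute nontrivially, and each such state satisfies $\langle D|\sigma\rangle \equiv 1$.

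At a type-(2), (3), (5), (6) crossing the label is preserved along each strand, so these surviving states are in bijection with functions $\varphi : \{1,\dots,m\} \to \mathcal{I}_N$ assigning a single label to every component $D_j$; the $g$-invariance of $\sigma_\varphi$ amounts to $\varphi$ being constant on the orbits of the permutation $\pi$ that $g$ induces on $\{1,\dots,m\}$. Grouping components by label yields sublinks $D^{(c)} := \bigcup_{j:\varphi(j)=c} D_j$ whose oriented (Seifert-type) smoothings realise the component loops of $\sigma_\varphi$, each labeled by $c$.

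The main obstacle is then the geometric congruence
\[ \|\sigma_\varphi\| \equiv \sum_{j=1}^{m} \lambda_j\, \varphi(j) \pmod p. \]
To establish it, observe that $g$ permutes the Seifert circles of each $g$-invariant $D^{(c)}$; free $p$-orbits contribute $0 \pmod p$ to $\sum_\ell \mathrm{rot}(\ell)$, while any $g$-invariant simple closed curve in $\Real^2\setminus\{O\}$ must enclose $O$ (a short Brouwer argument applied to its closed interior) and so satisfies $\mathrm{rot}(\ell) = \mathrm{wn}_O(\ell)$. Since oriented smoothings preserve the homology class in $\Real^2\setminus\{O\}$, $\sum_\ell \mathrm{wn}_O(\ell) = \mathrm{wn}_O(D^{(c)}) = \sum_{j:\varphi(j)=c} \lambda_j$, and summing over $c$ yields the congruence; Lemma~\ref{lem:exp} turns it into $q^{\|\sigma_\varphi\|} \equiv \prod_j q^{\lambda_j\varphi(j)} \pmod{q^p-1}$. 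A final Frobenius step converts the sum over $\pi$-invariant $\varphi$ into the full sum over $\mathcal{I}_N^m$: for each size-$p$ $\pi$-orbit of components with common linking number $\lambda_O$ one checks that both $\bigl(\sum_c q^{\lambda_O c}\bigr)^p$ and $\sum_c q^{p\lambda_O c}$ are $\equiv N \pmod{(p,q^p-1)}$, while size-$1$ orbits contribute identically in the two sums. Assembling these ingredients gives the stated congruence.
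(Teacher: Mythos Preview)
Your proof is correct and follows essentially the same route as the paper's proof of Theorem~\ref{thm:maincongm} (of which the corollary is the immediate contrapositive): reduce to $\zeta$-symmetric states modulo $p$, kill states containing a $(q-q^{-1})$-weight vertex via $(q-q^{-1})^p\equiv 0$, identify the surviving ``proper'' states with $\mathcal{I}_N^m$ by the label-chasing observation, and compute $\|\sigma\|$ modulo $p$ via the loops enclosing the axis. The only cosmetic difference is your final ``Frobenius step'': the paper instead simply re-adds the asymmetric proper states, noting that they come in $\zeta$-orbits of size $p$ with constant value of $\sum_j \varphi_\sigma(j)\lambda_j$ and hence cancel modulo $p$, which gives the full sum over $\mathcal{I}_N^m$ a bit more directly than your orbit-by-orbit factorization.
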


\begin{proof}[Proof of Theorem~\ref{thm:maincongm}]
We denote by $\zeta$ the rotation around $\gamma$ through the angle $2\pi / p$, and assume without loss of generality that $D$ and $G(D)$ are $p$-periodic configurations, i.e., symmetric under $\zeta$.

Let $\sigma$ be an $N$-state on $D$.
Then either $\zeta(\sigma) \neq \sigma$ or $\zeta(\sigma) = \sigma$.
If $\sigma$ is not symmetric under $\zeta$ (i.e., $\zeta(\sigma) \neq \sigma$), then the orbit of $\sigma$ under the action of $\zeta$ consists of $p$ distinct but congruent states $\sigma, \zeta(\sigma), \ldots, \zeta^{p-1}(\sigma)$.
Since all these congruent states contribute the same value toward $\langle D \rangle$, we have $\sum_{i=0}^{p-1} \langle D | \zeta^i(\sigma) \rangle q^{\norm{\sigma}} \equiv 0 \mbox{ mod } p$.
If $\sigma$ is symmetric under $\zeta$ (i.e., $\zeta(\sigma) = \sigma$) and $\sigma$ has a vertex of weight $\pm(q-q^{-1})$, then $\langle D | \sigma \rangle$ has a factor $\pm(q-q^{-1})^{p}$ whence zero modulo $(p, q^{p}-1)$.
On the other hand, if $\sigma$ is symmetric under $\zeta$ and $\sigma$ has no vertex of weight $\pm(q-q^{-1})$ (i.e., each vertex of $\sigma$ has weight $q^{\pm 1}$ or $1$), then $\langle D | \sigma \rangle = q^{pk}$ for some $k \in \Integer$, and hence $\langle D | \sigma \rangle \equiv 1 \mbox{ mod } (q^{p}-1)$.

When $\sigma$ has no vertex of weight $\pm(q-q^{-1})$, we say that $\sigma$ is ``\textit{proper}''. 
From the previous argument we see that
\begin{equation}\label{eq:symprop}
\langle D \rangle = \sum_{\sigma \, : \, \mathrm{all}} \langle D | \sigma \rangle q^{\norm{\sigma}} \equiv \sum_{\sigma \, : \, \mathrm{proper}\:\: \& \atop \quad\mathrm{symmetric}} q^{\norm{\sigma}} \quad\mbox{ mod } (p, q^p -1).
\tag{$\dagger$}
\end{equation}

Next we calculate the norm of a proper symmetric state.
Let $C_\sigma$ be the set of component loops of a state $\sigma$.
We consider the subset $C_\sigma^{\gamma} := \{ \ell \in C_\sigma \, : \, \mbox{ there is $\gamma$ inside $\ell$ }\}$.
If $\sigma$ is symmetric under $\zeta$, then
\[ \norm{\sigma} = \sum_{\ell \in C_\sigma} \mathrm{label}(\ell)\cdot\mathrm{rot}(\ell) \equiv \sum_{\ell \in C_\sigma^\gamma} \mathrm{label}(\ell)\cdot\mathrm{rot}(\ell) \quad \mbox{ mod } p. \]
Here we note from Lemma~\ref{lem:exp} that the quantity $q^{\norm{\sigma}}$ is determined up to modulo $(q^p -1)$ whenever the exponent $\norm{\sigma}$ is determined up to modulo $p$.

For the calculation of $\norm{\sigma}$ we claim the following lemma.
\begin{lemma}\label{lem:chasing}
If a vertex of a proper state is a self-crossing (i.e., it is not a crossing between two distinct link components), then it cannot have vertex weight $1$.

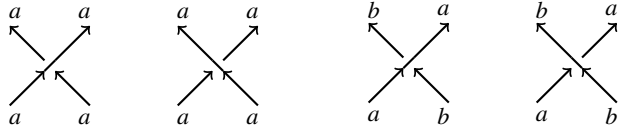
\begin{figure}
\begin{tabular}{cccc}
\begin{tikzpicture}[scale=1,x=1pt,y=1pt]
\draw [->, thick] (13,17) -- (0,30);
\draw [->, thick] (17,17) -- (30,30);
\draw [->, thick] (0,0) -- (13,13);
\draw [->, thick] (30,0) -- (17,13);
\draw [-, thick] (13,13) -- (17,17);

\node at (2,-5) {$\scriptstyle a$};
\node at (28,-5) {$\scriptstyle a$};
\node at (2,35) {$\scriptstyle a$};
\node at (28,35) {$\scriptstyle a$};
\end{tikzpicture}

& \hspace{0.4cm}

\begin{tikzpicture}[scale=1,x=1pt,y=1pt]
\draw [->, thick] (13,17) -- (0,30);
\draw [->, thick] (17,17) -- (30,30);
\draw [->, thick] (0,0) -- (13,13);
\draw [->, thick] (30,0) -- (17,13);
\draw [-, thick] (13,17) -- (17,13);

\node at (2,-5) {$\scriptstyle a$};
\node at (28,-5) {$\scriptstyle a$};
\node at (2,35) {$\scriptstyle a$};
\node at (28,35) {$\scriptstyle a$};
\end{tikzpicture}

&\hspace{0.7cm}

\begin{tikzpicture}[scale=1,x=1pt,y=1pt]
\draw [->, thick] (13,17) -- (0,30);
\draw [->, thick] (17,17) -- (30,30);
\draw [->, thick] (0,0) -- (13,13);
\draw [->, thick] (30,0) -- (17,13);
\draw [-, thick] (13,13) -- (17,17);

\node at (2,-5) {$\scriptstyle a$};
\node at (28,-5) {$\scriptstyle b$};
\node at (2,35) {$\scriptstyle b$};
\node at (28,35) {$\scriptstyle a$};
\end{tikzpicture}

&\hspace{0.4cm}

\begin{tikzpicture}[scale=1,x=1pt,y=1pt]
\draw [->, thick] (13,17) -- (0,30);
\draw [->, thick] (17,17) -- (30,30);
\draw [->, thick] (0,0) -- (13,13);
\draw [->, thick] (30,0) -- (17,13);
\draw [-, thick] (13,17) -- (17,13);

\node at (2,-5) {$\scriptstyle a$};
\node at (28,-5) {$\scriptstyle b$};
\node at (2,35) {$\scriptstyle b$};
\node at (28,35) {$\scriptstyle a$};
\end{tikzpicture}
\end{tabular}
\caption{Typical crossings in a proper state ($a\neq b$)}
\label{fig:tvps}
\end{figure}

\end{lemma}
\begin{proof}[Proof of Lemma~\ref{lem:chasing}]
The proof is done by contradiction:
Assume that a proper state has the crossing of the third (or fourth) diagram in Figure~\ref{fig:tvps}.
If we chase $D$ (not $G(D)$) starting from the crossing along the upper right arc (labeled $a$), then we arrive at the crossing again along the lower right arc (labeled $b$) by the assumption.
However it is impossible because chasing across the crossings of weights $q^{\pm 1}$ or $1$ do \emph{not} alter an arrow-label.
(Consider all crossing types of $D$ depicted in Figure~\ref{fig:tvps}.)
\end{proof}

\begin{corollary}\label{cor:oto}
There is a one-to-one correspondence $\sigma \mapsto \varphi_\sigma$ between the set of all proper states on $D = \bigcup_{j=1}^{m} D_j$ and the set $\mathcal{I}_N^m$.
\end{corollary}
\begin{proof}[Proof of Corollary~\ref{cor:oto}]
By the lemma if $\sigma$ is a proper state, all arcs of $D_j$ have the same arrow-label.
We define this label as $\varphi_\sigma(j)$.
Conversely, from any $\varphi \in \mathcal{I}_N^m$, we get a unique arrow-labeling function by refining $\varphi$.
It is naturally consistent with the conditions (2), (3), (5), (6) in Figure~\ref{fig:splcprojhomfly}.
\end{proof}

If $\sigma$ is a proper symmetric state, then $C_\sigma^\gamma = \bigcup_{I \in \mathcal{I}_N } E_I$ where
\[ E_I := \left\{ \ell \in C_\sigma^\gamma \, : \, \ell \mbox{ has label } I \right\} = \Bigg\{ \ell \in C_\sigma^\gamma \, : \, \ell \mbox{ lies in}\!\! \bigcup_{k \in \varphi_\sigma^{-1}(I)} \!\!\! D_k \Bigg\}. \]
From the previous result $\norm{\sigma} \equiv \sum_{\ell \in C_\sigma^\gamma} \mathrm{label}(\ell)\cdot\mathrm{rot}(\ell) \mbox{ mod } p$, we get
\[ \norm{\sigma} \equiv \sum_{I\in \mathcal{I}_N} I \cdot \sum_{\ell \in E_I} \mathrm{rot}(\ell) = \sum_{I \in \mathcal{I}_N} I \cdot \sum_{k \in \varphi_\sigma^{-1}(I)} \lambda_k = \sum_{j=1}^{m} \varphi_\sigma(j) \lambda_j \: \mbox{ mod } p. \]
If a proper state $\sigma$ is not symmetric, then the quantities $\sum_{j=1}^{m} \varphi_{\zeta^k(\sigma)}(j)\lambda_j$ are the same for all $k \in \{ 0,\ldots, p-1 \}$ thanks to the symmetry of $D$ under $\zeta$.
Therefore $\sum_{k=0}^{p-1} q^{\sum_{j=1}^{m} \varphi_{\zeta^k(\sigma)}(j)\lambda_j} \equiv 0 \mbox{ mod } p$ and so we obtain from Eq.~\eqref{eq:symprop}
\[ \langle D \rangle \equiv \sum_{\sigma \, : \, \mathrm{proper}} q^{\sum_{j=1}^{m} \varphi_\sigma(j)\lambda_j} = \sum_{\varphi \in \mathcal{I}_N^m} q^{\sum_{j=1}^{m} \varphi(j)\lambda_j} \quad \mbox{ mod } (p, q^p - 1). \]
Since $L$ is $p$-periodic, $p$ divides $w(D)$ whence
\[ \mathcal{P}^{(N)}_L(q) = q^{-w(D)\cdot N} \langle D \rangle \equiv \langle D \rangle \quad \mbox{ mod } (q^p -1). \]
as desired.
The proof of Theorem~\ref{thm:maincongm} is now complete.
\end{proof}

\section{More congruences for periodic links}\label{sec:congqpp1}
\noindent In this section we change the ideal $(p, q^p -1)$ in Theorem~\ref{thm:maincongm}.
We first establish the following two (easy) observations;
we have included proofs for the sake of completeness.

\begin{lemma}\label{lem:qmiqpq2p}
Let $c > 0$ be an odd integer and $A$ a commutative ring.
Put $R := A [q^{\pm 1}]$ and $R = R_0 \oplus R_1$ where $R_0 := \bigoplus A q^{2k}$ and $R_1 := \bigoplus A q^{2k+1}$.
If $f \in R_0$ or $f \in R_1$, then $f \in (q^c +1)$ and $f \in (q^{2c} -1)$ whenever $f \in (q^c -1)$.
\end{lemma}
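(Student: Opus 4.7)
The plan is to prove the single stronger conclusion $f \in (q^{2c}-1)$ directly; both assertions of the lemma then follow immediately, since $q^{2c}-1 = (q^c-1)(q^c+1)$ gives $(q^{2c}-1) \subseteq (q^c+1)$. The key observation I would exploit is that, because $c$ is odd, multiplication by $q^c$ interchanges $R_0$ and $R_1$: if $u \in R_0$ then $q^c u \in R_1$, and conversely.

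First, I would write $f = (q^c - 1) g$ for some $g \in R$ and decompose $g = g_0 + g_1$ uniquely with $g_i \in R_i$. Expanding and regrouping yields
\[ f = (q^c - 1)(g_0 + g_1) = (q^c g_1 - g_0) + (q^c g_0 - g_1), \]
and by the parity-swap property the first summand lies in $R_0$ while the second lies in $R_1$. This is precisely the decomposition of $f$ into its even and odd parts under $R = R_0 \oplus R_1$.

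Next, I would invoke the homogeneity hypothesis on $f$. If $f \in R_0$, the $R_1$ summand must vanish, forcing $g_1 = q^c g_0$ and hence $g = g_0(1 + q^c)$; symmetrically, if $f \in R_1$, then $g_0 = q^c g_1$ and $g = (1 + q^c) g_1$. Either way $(q^c+1)$ divides $g$ in $R$, so $f = (q^c - 1) g$ is divisible by $(q^c-1)(q^c+1) = q^{2c}-1$, which gives both $f \in (q^c+1)$ and $f \in (q^{2c}-1)$.

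There is no real obstacle in this argument; the only subtle point is that the parity-swap property of $q^c$, on which the vanishing step rests, genuinely requires $c$ odd. For even $c$, multiplication by $q^c$ would preserve $R_0$ and $R_1$ separately, the displayed decomposition would not split into complementary parities, and the mechanism would break down.
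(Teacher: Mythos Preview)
Your proof is correct and takes a genuinely different route from the paper's. The paper argues by reverse induction on the degree of the quotient $g$ in $f = g(q^c-1)$: it observes that the monomial $-a_{d-c}q^{d-c}$ contributed by the top coefficient of $g$ lies in the wrong parity component (since $c$ is odd) and must therefore be cancelled by $a_{d-2c}q^{d-c}$, forcing $a_{d-c}=a_{d-2c}$; it then peels off the resulting multiple $a_{d-c}q^{d-2c}(q^{2c}-1)$ of $q^{2c}-1$, reducing the degree of $g$, and repeats. Your argument instead uses the $\Integer/2$-grading globally: decomposing $g=g_0+g_1$ and using that multiplication by $q^c$ swaps $R_0$ and $R_1$ immediately forces $g_1=q^cg_0$ (or $g_0=q^cg_1$), so that $(q^c+1)\mid g$ in one stroke. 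Your version is shorter, avoids any induction or degree bookkeeping, and makes the role of the hypothesis $f\in R_0$ or $f\in R_1$ completely transparent; the paper's version is more explicit at the level of coefficients but otherwise gains nothing.
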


\begin{proof}
Suppose that $f \in R_0$ and $0 \neq f \in (q^c -1)$ with $\deg f = d$.
Then there is $g = \sum_{j= -n}^{d-c} a_j q^j \in R$ such that $f = g(q^c -1)$.
Since $-a_{d-c}q^{d-c} \in R_1$ appears in the product of $g$ and $q^c -1$ it should be canceled by $a_{d-2c}q^{d-c}$ in $a_{d-2c}q^{d-2c}(q^c -1)$.
Thus $d-2c \geq -n$ and $a_{d-c} = a_{d-2c}$ so that
\[ h: = a_{d-c}q^{d-c}(q^c -1) + a_{d-2c}q^{d-2c}(q^c -1) = a_{d-c} q^{d-2c}(q^c -1)(q^c +1) \]
is in $R_0$.
If $f - h = 0$, then the proof is done.
If $f - h \neq 0$, then the reverse induction on $\deg g$ finishes the proof:
The quotient $g'$ for $f - h = g' (q^c -1)$ is $g' = g - (a_{d-c}q^{d-c} + a_{d-2c}q^{d-2c})$.
The case $f \in R_1$ is similar.
\end{proof}

\begin{proposition}\label{prop:pqmipqppq2p}
Let $p$ be an odd prime.
Suppose $R = \Integer [q^{\pm 1}]$ in Lemma~\ref{lem:qmiqpq2p}.
If $f \in R_0$ or $f \in R_1$, then we have $f \in (p, q^p +1)$ and $f \in (p, q^{2p} -1)$ whenever $f \in (p, q^p -1)$.
\end{proposition}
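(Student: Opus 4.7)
The plan is to reduce the whole statement modulo $p$ and then apply Lemma~\ref{lem:qmiqpq2p} over the coefficient ring $A = \Integer/p\Integer$. Concretely, I would introduce the quotient $\bar R := R/pR \cong (\Integer/p\Integer)[q^{\pm 1}]$ and observe that the parity-of-exponent decomposition $R = R_0 \oplus R_1$ descends verbatim to $\bar R = \bar R_0 \oplus \bar R_1$, since reducing integer coefficients mod $p$ does not mix distinct powers of $q$.

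With this setup in place, assume $f \in R_0$ (the case $f \in R_1$ is word-for-word the same) and $f \in (p, q^p -1)$. Let $\bar f$ denote the image of $f$ in $\bar R$. Then $\bar f \in \bar R_0$, and the hypothesis $f \in (p, q^p -1)$ is exactly the statement that $\bar f$ lies in the ideal $(q^p -1)\bar R$. Since $p$ is an odd prime, $c := p$ is an odd positive integer, so I can invoke Lemma~\ref{lem:qmiqpq2p} with this $c$ and $A = \Integer/p\Integer$. The conclusion of that lemma, applied in $\bar R$, yields $\bar f \in (q^p +1)\bar R$ and $\bar f \in (q^{2p}-1)\bar R$. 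Pulling back to $R$, these translate precisely to $f \in (p, q^p +1)$ and $f \in (p, q^{2p}-1)$, which is what is asserted.

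There is essentially no obstacle beyond one sanity check, namely that Lemma~\ref{lem:qmiqpq2p} genuinely applies when the coefficient ring is $\Integer/p\Integer$. But that lemma is stated for an arbitrary commutative ring $A$, and the manipulations in its proof are purely additive cancellations among coefficients (no use is made of the integral-domain property or of any division), so they remain valid verbatim in $\Integer/p\Integer$. The role of $p$ being an odd prime in the present proposition enters only to ensure that the exponent $c = p$ fed into Lemma~\ref{lem:qmiqpq2p} is odd, and that the reduction mod $p$ makes sense as a ring quotient.
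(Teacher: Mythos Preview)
Your proposal is correct and follows essentially the same approach as the paper's own proof: both pass to the quotient $\overline{R} = \Integer_p[q^{\pm 1}]$, observe that $\bar f$ lies in the appropriate parity component and in the ideal $(q^p-1)$, invoke Lemma~\ref{lem:qmiqpq2p} with $A = \Integer_p$ and $c = p$, and then lift the resulting ideal memberships back to $R$. Your additional remark that the proof of Lemma~\ref{lem:qmiqpq2p} uses only additive manipulations (and hence goes through over any commutative ring) is a useful sanity check that the paper leaves implicit.
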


\begin{proof}
Consider the canonical homomorphism
\[ R \rightarrow \overline{R} := \Integer_p [q^{\pm 1}] \, ; \quad g = \sum a_i q^i \mapsto \bar{g} = \sum \bar{a}_i q^i , \]
where $\Integer_p = \{ \bar{0}, \ldots, \overline{p-1} \}$ is the ring of integers modulo $p$.
Suppose $f \in (p, q^p -1) = (p) + (q^p -1)$.
Then $\bar{f} \in (\bar{1}q^p -\bar{1}) \subseteq \overline{R}$.
Since $\bar{f} \in \overline{R}_0$ or $\bar{f} \in \overline{R}_1$ we have $\bar{f} \in (\bar{1}q^p +\bar{1}) \subseteq \overline{R}$ by Lemma~\ref{lem:qmiqpq2p} for $A = \Integer_p$.
This means (from $\overline{R} = R / (p)$) that there exist $f_1 \in (p) \subseteq R$ and $f_2 \in (q^p +1) \subseteq R$ such that $f = f_1 + f_2 \in (p, q^p +1)$.
The case $f \in (p, q^{2p} -1)$ is similar.
\end{proof}

An algebraic result induced from Proposition~\ref{prop:pqmipqppq2p} and Theorem~\ref{thm:maincongm} is:

\begin{corollary}\label{cor:algconscong}
Let $p$ be an odd prime.
Suppose that both the LHS and the RHS of the congruence in Theorem~\ref{thm:maincongm} lie in either $R_0$ or $R_1$ where $R = \Integer [q^{\pm 1}]$.
Then we can replace the ideal $(p, q^p -1)$ by $(p, q^p +1)$ or $(p, q^{2p} -1)$.
\end{corollary}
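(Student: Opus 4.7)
The plan is to reduce the corollary to a single application of Proposition~\ref{prop:pqmipqppq2p}. The point is that the conclusion of Theorem~\ref{thm:maincongm} can be restated as the assertion that the element
\[ f \;:=\; \mathcal{P}^{(N)}_L(q) \;-\; \sum_{\varphi \in \mathcal{I}_N^{m}} \prod_{j=1}^{m} q^{\lambda_j \cdot \varphi(j)} \;\in\; R \]
belongs to the ideal $(p, q^p -1)$. So it is enough to pass from this to $f \in (p, q^p +1)$ and $f \in (p, q^{2p} -1)$.

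First I would use the hypothesis of the corollary to see that $f$ sits in $R_0$ or in $R_1$. Indeed, each of $R_0$ and $R_1$ is an additive subgroup of $R$, so if both summands defining $f$ are in $R_0$ then $f \in R_0$, and likewise with $R_1$. (The hypothesis is precisely what prevents mixed parity from destroying this step: if one of the summands were in $R_0$ and the other in $R_1$, then $f$ would be a genuine mixed-parity element and Lemma~\ref{lem:qmiqpq2p} could not be applied.)

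With $f \in R_0 \cup R_1$ and $f \in (p, q^p -1)$ in hand, I would simply invoke Proposition~\ref{prop:pqmipqppq2p}, which is exactly the implication $f \in (p,q^p-1) \Rightarrow f \in (p, q^p +1)$ and $f \in (p, q^{2p}-1)$ under a single-parity hypothesis. That gives the two desired congruences
\[ \mathcal{P}^{(N)}_L(q) \equiv \sum_{\varphi \in \mathcal{I}_N^{m}} \prod_{j=1}^{m} q^{\lambda_j \cdot \varphi(j)} \quad \mbox{mod } (p, q^p +1) \]
and the analogous statement modulo $(p, q^{2p}-1)$, completing the proof.

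There is no real obstacle here since both ingredients have already been established; the only thing to be careful about is the grouping of the parity hypothesis, namely that it applies to the two sides \emph{separately} and not merely to their difference having a fixed parity. Once this is observed, the corollary is a one-line consequence of Theorem~\ref{thm:maincongm} together with Proposition~\ref{prop:pqmipqppq2p}.
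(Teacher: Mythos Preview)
Your argument is correct and matches the paper's approach: the paper presents this corollary without a separate proof, simply as a direct consequence of Proposition~\ref{prop:pqmipqppq2p} combined with Theorem~\ref{thm:maincongm}, which is exactly the reduction you carry out by forming the difference $f$ and checking its parity. Your parenthetical remark about needing both sides in the \emph{same} $R_i$ is the only subtlety, and you handle it correctly.
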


\begin{figure}
\begin{center}
\begin{tabular}{ccc|cl|l}
$N$ & $r$ & $s$ & $\mathcal{P}^{(N)}_{L}(q)$ & $\quad$($Q_1, Q_2, Q_3$) & Note\\ \hline
even & even & even & odd & = odd + even + even & knots \\
odd & even & even & even & = even + even + even & knots \\
even & odd & odd & even & = odd + even + odd & \\
odd & odd & odd & even & = even + odd + odd & \\
\end{tabular}
\end{center}
\caption{Table for the parities of exponents in $\mathcal{P}^{(N)}_{L}(q)$}
\label{fig:expofR}
\end{figure}

It is known that $r+s$ is always even for each monomial part $c a^r z^s$ ($c,r,s, \in \Integer$) in a homfly polynomial $P_{L}(a,z)$.
By \eqref{eq:defofqi} the term $c a^r z^s$ changes into $c Q_1 Q_2 Q_3$ in $\mathcal{P}^{(N)}_{L}(q)$, where $Q_1 = (q^{N-1} + q^{N-3} + \cdots + q^{-N+1})$, $Q_2 = q^{-rN}$, and $Q_3 = (q - q^{-1})^{s}$.
Thus the parity of exponents in $\mathcal{P}^{(N)}_{L}(q)$ is described in the table of Figure~\ref{fig:expofR}.
On the other hand, the RHS of the congruence in Theorem~\ref{thm:maincongm} depends on $N$ and $\lambda_j$'s.
Note that, whenever $N$ is odd (i.e., all $I \in\mathcal{I}_N$ are even), the parities for the LHS and the RHS coincide automatically.
In this case we can change the ideal by Corollary~\ref{cor:algconscong}.
The preceding argument gives:

\begin{corollary}\label{cor:maincongNoddp2p}
Let $p$ be an odd prime and we keep the same notation as in Theorem~\ref{thm:maincongm}.
If $N$ is odd, then we have up to either modulo $(p, q^p +1)$ or modulo $(p, q^{2p} -1)$
\[  \mathcal{P}^{(N)}_L(q) \equiv \sum_{\varphi \in \mathcal{I}_N^{m}} \prod_{j=1}^{m} q^{\lambda_j \cdot \varphi(j)} . \]
\end{corollary}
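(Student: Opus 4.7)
The plan is to deduce this directly from Theorem~\ref{thm:maincongm} combined with Corollary~\ref{cor:algconscong}; the only real work is to verify the parity hypothesis of the latter, i.e.\ that both sides of the congruence
\[ \mathcal{P}^{(N)}_L(q) \equiv \sum_{\varphi \in \mathcal{I}_N^m} \prod_{j=1}^m q^{\lambda_j \varphi(j)} \quad \mbox{ mod } (p, q^p - 1) \]
sit inside the same homogeneous summand $R_0 = \bigoplus_k \Integer q^{2k}$ (or both inside $R_1$) when $N$ is odd.

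For the right-hand side I would note that $\mathcal{I}_N = \{-N+1, -N+3, \ldots, N-1\}$ consists entirely of \emph{even} integers when $N$ is odd, since $-N+1$ is even and the step is $2$. Consequently, for every $\varphi \in \mathcal{I}_N^m$ the exponent $\sum_{j=1}^{m} \lambda_j \varphi(j)$ is even, so each summand $\prod_{j} q^{\lambda_j \varphi(j)}$ lies in $R_0$, and hence so does the whole sum.

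For the left-hand side I would invoke the standard parity fact already recalled around Figure~\ref{fig:expofR}: in every monomial $c a^r z^s$ of $P_L(a,z)$, one has $r+s$ even. Substituting $a = q^{-N}$, $z = q - q^{-1}$ via \eqref{eq:defofqi}, such a term becomes $c\,Q_1 Q_2 Q_3$ with $Q_1 = q^{N-1} + q^{N-3} + \cdots + q^{-N+1}$, $Q_2 = q^{-rN}$, and $Q_3 = (q-q^{-1})^s$. With $N$ odd: every exponent of $Q_1$ is even; the exponent of $Q_2$ has the same parity as $r$; and every monomial of $Q_3$ has exponent of the same parity as $s$. The total exponent parity is therefore $r+s \equiv 0 \mbox{ mod } 2$, so $\mathcal{P}^{(N)}_L(q) \in R_0$, matching the first two rows of Figure~\ref{fig:expofR}.

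With both sides confirmed to lie in $R_0$, Corollary~\ref{cor:algconscong} immediately replaces the ideal $(p, q^p -1)$ by either $(p, q^p +1)$ or $(p, q^{2p} -1)$, finishing the proof. There is no genuine obstacle here: the geometric content is carried by Theorem~\ref{thm:maincongm} and the algebraic promotion by Lemma~\ref{lem:qmiqpq2p} / Proposition~\ref{prop:pqmipqppq2p}. The only subtlety worth flagging is that this approach works precisely because $N$ odd forces $\mathcal{I}_N$ to consist of even integers; for $N$ even the right-hand side mixes parities and Corollary~\ref{cor:algconscong} is not available.
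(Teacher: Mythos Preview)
Your proposal is correct and follows essentially the same route as the paper: verify that for odd $N$ both sides of Theorem~\ref{thm:maincongm} lie in $R_0$ (using that $\mathcal{I}_N$ consists of even integers and the $Q_1Q_2Q_3$ parity count), then invoke Corollary~\ref{cor:algconscong}. One cosmetic slip: the rows of Figure~\ref{fig:expofR} with $N$ odd are the second and fourth, not the first two, but your self-contained parity computation is independent of that cross-reference.
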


The proof of the next theorem (which is, by Corollary~\ref{cor:maincongNoddp2p}, meaningful only when $N$ is even) is based on a different approach.

\begin{theorem}\label{thm:maincongNevenp}
Let $p$ be an odd prime and we keep the same notation as in Theorem~\ref{thm:maincongm}.
Then we have
\[ \mathcal{P}^{(N)}_L(q) \equiv \pm \sum_{\varphi \in \mathcal{I}_N^{m}} \prod_{j=1}^{m} q^{\lambda_j \cdot \varphi(j)} \quad \mbox{ mod } (p, q^p +1). \]
\end{theorem}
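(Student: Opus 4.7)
The plan is to derive Theorem \ref{thm:maincongNevenp} from Theorem \ref{thm:maincongm} by applying the ring automorphism $q \mapsto -q$ of $\Integer[q^{\pm 1}]$, combined with the parity data already collected in Figure \ref{fig:expofR}. Since $p$ is odd, $(-q)^p - 1 = -(q^p + 1)$, so this automorphism carries the ideal $(p, q^p - 1)$ onto $(p, q^p + 1)$. Starting from the identity
\[
\mathcal{P}^{(N)}_L(q) - \sum_{\varphi \in \mathcal{I}_N^m} \prod_{j=1}^m q^{\lambda_j \varphi(j)} = p\,\alpha(q) + (q^p - 1)\,\beta(q)
\]
guaranteed by Theorem \ref{thm:maincongm} for some $\alpha, \beta \in \Integer[q^{\pm 1}]$, the substitution $q \mapsto -q$ yields at once
\[
\mathcal{P}^{(N)}_L(-q) \equiv \sum_{\varphi \in \mathcal{I}_N^m} \prod_{j=1}^m (-q)^{\lambda_j \varphi(j)} \mbox{ mod } (p, q^p + 1).
\]

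The next step is to evaluate each side up to a global sign using parity. By Figure \ref{fig:expofR}, $\mathcal{P}^{(N)}_L(q)$ lies entirely in $R_0$ or entirely in $R_1$; call this common parity $i \in \{0,1\}$, so that $\mathcal{P}^{(N)}_L(-q) = (-1)^i \mathcal{P}^{(N)}_L(q)$. On the other side, every element of $\mathcal{I}_N$ has the same parity, namely that of $N-1$, so each exponent $\sum_j \lambda_j \varphi(j)$ has the common parity $\Lambda \equiv (N-1)\sum_j \lambda_j \mbox{ mod } 2$, independent of $\varphi$. Consequently
\[
\sum_{\varphi \in \mathcal{I}_N^m} \prod_{j=1}^m (-q)^{\lambda_j \varphi(j)} = (-1)^\Lambda \sum_{\varphi \in \mathcal{I}_N^m} \prod_{j=1}^m q^{\lambda_j \varphi(j)}.
\]

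Substituting these two evaluations into the preceding congruence and cancelling $(-1)^i$ on the left produces
\[
\mathcal{P}^{(N)}_L(q) \equiv (-1)^{i+\Lambda} \sum_{\varphi \in \mathcal{I}_N^m} \prod_{j=1}^m q^{\lambda_j \varphi(j)} \mbox{ mod } (p, q^p + 1),
\]
which is precisely the stated $\pm$ congruence. I do not anticipate a genuine obstacle here: the only real idea is the $q \mapsto -q$ trick, after which everything reduces to parity bookkeeping handled by Figure \ref{fig:expofR} and the definition of $\mathcal{I}_N$. As a sanity check, $N$ odd forces $i = \Lambda = 0$ and one recovers Corollary \ref{cor:maincongNoddp2p} with $+$ sign, while for $N$ even both signs can genuinely occur.
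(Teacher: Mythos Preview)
Your argument is correct and takes a genuinely different, shorter route than the paper. The paper reproves the state-model argument from scratch with the ideal $(p, q^p+1)$, tracking signs in three separate places: (i) showing that for a proper symmetric state the product $\langle D|\sigma\rangle = q^{pk}$ contributes $\pm 1$ with a sign depending only on the self-crossing signs of $D$; (ii) decomposing the loop set as $C_\sigma^\gamma \cup B_\sigma^+ \cup B_\sigma^-$ and arguing via the Whitney degree that the parity of $t_\sigma = (|B_\sigma^+|+|B_\sigma^-|)/p$ is independent of $\sigma$, so that $q^{\|\sigma\|}$ picks up a uniform sign; (iii) handling the writhe factor $q^{-w(D)N}$ modulo $q^p+1$. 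Your approach bypasses all of this by transporting Theorem~\ref{thm:maincongm} along the automorphism $q \mapsto -q$ and then reducing the sign question to two parity facts: that $\mathcal{P}^{(N)}_L(q)$ has exponents of a single parity, and that every element of $\mathcal{I}_N$ has the parity of $N-1$. This is more conceptual and avoids redoing the state-sum bookkeeping; the paper's approach, on the other hand, makes the source of each sign geometrically visible. One small expository point: Figure~\ref{fig:expofR} by itself does not quite assert that for a fixed link all monomials $ca^rz^s$ of $P_L$ share a common parity of $r$ (and $s$); you are tacitly using the standard fact that all $z$-exponents in $P_L(a,z)$ have parity $m-1$, which follows from the skein relation. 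It would be worth making that explicit.
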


\begin{proof}
The proof is a modified version of that of Theorem~\ref{thm:maincongm}.
As in the previous proof, we use the same notation, terminology, and assumption (that $D$ and $G(D)$ are symmetric under $\zeta$).

Let $\sigma$ be an $N$-state on $D$.
If $\sigma$ is not symmetric under $\zeta$, then the congruent states $\sigma, \zeta(\sigma), \ldots, \zeta^{p-1}(\sigma)$ give zero contribution modulo $p$ toward the state model.
If $\sigma$ is symmetric but not proper, then $\langle D | \sigma \rangle$ has a factor $\pm (q - q^{-1})^{p}$ whence zero modulo $(p, q^p +1)$.

We now assume that $\sigma$ is proper.
We see from Lemma~\ref{lem:chasing} that each ``self-crossing'' of a link component is not a vertex of weight $1$ but a vertex of weight $q^{\pm 1}$ depending only on its crossing sign.
Since the number of ``link-crossings'' between two distinct components are even, the parity (modulo $2$) of an integer $k$ for which $\langle D | \sigma \rangle = q^{pk}$ is determined completely by the diagram $D$ (i.e., the crossing signs of the self-crossings in $D$), and does not depend on a specific choice of a proper state $\sigma$ on $D$.
We have now
\[ \langle D \rangle = \sum_{\sigma \, : \, \mathrm{all}} \langle D | \sigma \rangle q^{\norm{\sigma}} \equiv \pm \!\!\! \sum_{\sigma \, : \, \mathrm{proper} \:\: \& \atop \quad\mathrm{symmetric}} q^{\norm{\sigma}} \quad\mbox{ mod } (p, q^p +1). \]

In order to compute $q^{\norm{\sigma}}$, we first decompose $C_{\sigma} = C_{\sigma}^{\gamma} \cup B_{\sigma}^{+} \cup B_{\sigma}^{-}$ for any (not necessarily, proper symmetric) state $\sigma$, where
\[ B_{\sigma}^{\pm} := \{ \ell \in C_{\sigma}\setminus C_{\sigma}^{\gamma} \, : \, \mathrm{rot}(\ell) = \pm 1 \}. \]
Since $\sum_{\ell \in C_{\sigma}} \mathrm{rot}(\ell) = \sum_{\ell \in C_{\sigma}^{\gamma}} \mathrm{rot}(\ell) + \abs{B_{\sigma}^{+}} - \abs{B_{\sigma}^{-}}$ is the rotation number (Whitney degree) of the diagram $D$, it is independent on $\sigma$.
Similarly, since $\sum_{\ell \in C_{\sigma}^{\gamma}} \mathrm{rot}(\ell)$ is the sum $\sum_{j=1}^{m} \lambda_j$ of linking numbers, it is also independent on $\sigma$.
Thus, $\abs{B_{\sigma}^{+}} - \abs{B_{\sigma}^{-}}$ is independent on $\sigma$.
If $\sigma$ is symmetric, then both $\abs{B_{\sigma}^{+}}$ and $\abs{B_{\sigma}^{-}}$ become multiple of $p$ and $t_{\sigma} := (\abs{B_{\sigma}^{+}} + \abs{B_{\sigma}^{-}})/p$ has the same parity with $(\abs{B_{\sigma}^{+}} - \abs{B_{\sigma}^{-}})/p$.
Thus, whenever $\sigma$ is symmetric, we have
\[ \sum_{\ell \in B_{\sigma}^{+} \cup B_{\sigma}^{-}} \mathrm{label}(\ell)\cdot\mathrm{rot}(\ell) = p(\pm I_1 \pm I_2 \pm \cdots \pm I_{t_{\sigma}}) \quad\mbox{ for some } I_j \in \mathcal{I}_N . \]
Note that $(\pm I_1 \pm I_2 \pm \cdots \pm I_{t_{\sigma}})$ has the constant parity for all symmetric $\sigma$.
If $N$ is odd, i.e., $I_j$'s are even, then this fact is trivial.
However, if $N$ is even, i.e., $I_j$'s are odd, then we may use the fact that the parity of $t_{\sigma}$ is independent on a choice of $\sigma$.
Since
\[ \norm{\sigma} = \sum_{\ell \in C_{\sigma}^{\gamma}} \mathrm{label}(\ell)\cdot\mathrm{rot}(\ell) + \sum_{\ell \in B_{\sigma}^{+} \cup B_{\sigma}^{-}} \mathrm{label}(\ell)\cdot\mathrm{rot}(\ell) , \]
we obtain
\[ \sum_{\sigma \, : \, \mathrm{proper} \:\: \& \atop \quad\mathrm{symmetric}} q^{\norm{\sigma}} \equiv \pm \sum_{\sigma \, : \, \mathrm{proper} \:\: \& \atop \quad\mathrm{symmetric}} q^{\sum_{\ell \in C_{\sigma}^{\gamma}} \mathrm{label}(\ell)\cdot\mathrm{rot}(\ell)} \quad\mbox{ mod } (p, q^p +1). \]
As in the proof of Theorem~\ref{thm:maincongm}, we have
\[ \sum_{\ell \in C_{\sigma}^{\gamma}} \mathrm{label}(\ell)\cdot\mathrm{rot}(\ell) = \sum_{I\in \mathcal{I}_N} I \cdot \sum_{\ell \in E_I} \mathrm{rot}(\ell) = \sum_{I \in \mathcal{I}_N} I \cdot \sum_{k \in \varphi_\sigma^{-1}(I)} \lambda_k = \sum_{j=1}^{m} \varphi_\sigma(j) \lambda_j . \]
Since $\sum_{k=0}^{p-1} q^{\sum_{j=1}^{m} \varphi_{\zeta^k(\sigma)}(j)\lambda_j} \equiv 0 \mbox{ mod } p$ for a proper and asymmetric $\sigma$,
\[ \sum_{\sigma \, : \, \mathrm{proper} \:\: \& \atop \quad\mathrm{symmetric}} q^{\sum_{\ell \in C_{\sigma}^{\gamma}} \mathrm{label}(\ell)\cdot\mathrm{rot}(\ell)} \equiv \pm \sum_{\sigma \, : \, \mathrm{proper}} q^{\sum_{j=1}^{m} \varphi_\sigma(j)\lambda_j} = \pm \sum_{\varphi \in \mathcal{I}_N^m} q^{\sum_{j=1}^{m} \varphi(j)\lambda_j} \]
$\mbox{ mod } (p, q^p +1)$.
Finally, since $w(D) \equiv 0 \mbox{ mod } p$, we have
\[ \mathcal{P}^{(N)}_L(q) = q^{-w(D)\cdot N} \langle D \rangle \equiv \pm \langle D \rangle \quad \mbox{ mod } (q^p +1). \]
Gathering the pieces of the above congruences, the proof is done.
\end{proof}

\begin{proposition}\label{prop:Niseven}
Suppose $N$ is even.
If all $\lambda_j$'s are simultaneously odd or even, then the $\pm$-sign of the congruence in Theorem~\ref{thm:maincongNevenp} is determined by the parity of exponents in $\mathcal{P}^{(N)}_L(q)$.
If $L$ is a knot, then the $\pm$-sign is determined by the parity of $\lambda = \lambda_1$.
Moreover, if the sign is ``$+$'', then the ideal $(p, q^p +1)$ in Theorem~\ref{thm:maincongNevenp} can be replaced by $(p, q^{2p} -1)$.
\end{proposition}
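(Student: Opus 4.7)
The plan is to exploit the ring automorphism $q \mapsto -q$ of $R = \Integer[q^{\pm 1}]$, which (because $p$ is odd) swaps the ideals $(p, q^p - 1)$ and $(p, q^p + 1)$ and acts by multiplication by $(-1)^a$ on the parity-graded subring $R_a$. Applying it to the congruence already established in Theorem~\ref{thm:maincongm} will pin down the ambiguous sign in Theorem~\ref{thm:maincongNevenp} in terms of the parities of exponents on each side.

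First I would identify those parities. By the discussion at Figure~\ref{fig:expofR}, the left-hand side $F(q) := \mathcal{P}^{(N)}_L(q)$ lies entirely in $R_a$ for some $a \in \{0,1\}$ depending only on $L$ and $N$, so $F(-q) = (-1)^a F(q)$. For the right-hand side $G(q) := \sum_{\varphi \in \mathcal{I}_N^m} \prod_{j=1}^m q^{\lambda_j \varphi(j)}$, I would observe that since $N$ is even every element of $\mathcal{I}_N$ is odd, so the exponent $\sum_j \lambda_j \varphi(j)$ reduces modulo $2$ to $\sum_j \lambda_j$. Under the hypothesis that all $\lambda_j$ share a common parity, $b := \sum_j \lambda_j \bmod 2$ is a single constant, hence $G \in R_b$ and $G(-q) = (-1)^b G(q)$.

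Then I would apply $q \mapsto -q$ to Theorem~\ref{thm:maincongm}; using $(-q)^p = -q^p$ (again because $p$ is odd), the ideal $(p, q^p - 1)$ is sent to $(p, q^p + 1)$, and the resulting congruence reads $(-1)^a F \equiv (-1)^b G \mbox{ mod } (p, q^p + 1)$, i.e., $F \equiv (-1)^{a-b} G \mbox{ mod } (p, q^p + 1)$. Comparing with Theorem~\ref{thm:maincongNevenp} forces the $\pm$-sign to be $(-1)^{a-b}$, which is determined by the two parities. In the knot case ($m = 1$) this specializes to $(-1)^{a-\lambda}$, depending only on $\lambda \bmod 2$ since $a$ is already fixed by the link.

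For the last assertion, assuming the sign is ``$+$'' gives $F - G \in (p, q^p - 1) \cap (p, q^p + 1)$ (the first membership coming from Theorem~\ref{thm:maincongm}). Modulo $p$, the ideals $(\bar q^p - 1)$ and $(\bar q^p + 1)$ in $\Integer_p[q^{\pm 1}]$ are coprime because their sum contains the unit $2$, so the Chinese Remainder Theorem gives $(\bar q^p - 1) \cap (\bar q^p + 1) = (\bar q^p - 1)(\bar q^p + 1) = (\bar q^{2p} - 1)$. Lifting back yields $(p, q^p - 1) \cap (p, q^p + 1) = (p, q^{2p} - 1)$, so the congruence upgrades to modulo $(p, q^{2p} - 1)$. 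I expect the only delicate point to be careful bookkeeping of how the substitution $q \mapsto -q$ interacts with the ideals and the parity grading; everything else is formal.
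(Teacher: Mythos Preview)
Your argument is correct, and in fact more transparent than the paper's own proof. The paper establishes the same parity facts about $f = \mathcal{P}^{(N)}_L(q)$ and $g = \sum_{\varphi} \prod_j q^{\lambda_j\varphi(j)}$, then invokes Theorem~\ref{thm:maincongNevenp} (whose sign was produced by the detailed state-model analysis) and asserts rather tersely that ``since $p$ is odd, the $\pm$-sign is therefore determined by the parity of the exponent in $\bar f$,'' leaving the reader to supply the mechanism. Your route is different: you bypass Theorem~\ref{thm:maincongNevenp} altogether and instead transport the congruence of Theorem~\ref{thm:maincongm} through the involution $q\mapsto -q$, which (because $p$ is odd) swaps $(p,q^p-1)$ with $(p,q^p+1)$ and multiplies each parity-homogeneous side by the appropriate $(-1)^a$ or $(-1)^b$. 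This yields the explicit sign $(-1)^{a-b}$ with no further work, and in particular recovers the knot case ($a=1$ by Figure~\ref{fig:expofR}, $b\equiv\lambda$) exactly as the paper states. For the final clause, the paper appeals to Proposition~\ref{prop:resultant} (a resultant computation), while you use the Chinese Remainder Theorem in $\Integer_p[q^{\pm1}]$; these are equivalent.

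Two small remarks. First, your observation that each exponent $\sum_j\lambda_j\varphi(j)$ is congruent to $\sum_j\lambda_j$ modulo $2$ already shows $G\in R_b$ without the hypothesis that the $\lambda_j$ share a parity, so that hypothesis is not actually needed for your derivation (it does no harm, of course). Second, the phrase ``comparing with Theorem~\ref{thm:maincongNevenp} forces the $\pm$-sign'' is slightly misleading: you have independently \emph{proved} the congruence with the explicit sign $(-1)^{a-b}$, so there is nothing left to compare---you have simply identified a valid choice of sign, which is exactly what the proposition claims.
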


\begin{proof}
Let $f,g \in \Integer[q^{\pm 1}]$ be $f := \mathcal{P}^{(N)}_L(q)$ and $g := \sum_{\varphi \in \mathcal{I}_N^{m}} \prod_{j=1}^{m} q^{\lambda_j \cdot \varphi(j)}$.
If all $\lambda_j$'s are odd (resp. even), then the parity of exponents in $g$ is odd (resp. even) because all $I \in \mathcal{I}_N$ are odd.
Consider $\bar{f},\bar{g} \in \Integer_p[q^{\pm 1}]$ as in the proof of Proposition~\ref{prop:pqmipqppq2p}.
Then $\bar{f} \equiv \pm \bar{g} \mbox{ mod } (\bar{1}q^p +\bar{1})$ by Theorem~\ref{thm:maincongNevenp}.
Since $p$ is odd, the $\pm$-sign is therefore determined by the parity of the exponent in $\bar{f}$.
(See Figure~\ref{fig:expofR}.)
Note that if $L$ is a knot, then the exponents in $\bar{f}$ are always odd so that odd $\lambda$ (resp. even $\lambda$) yields the plus sign (resp. minus sign).
Finally, the next proposition completes the proof.
\end{proof}

\begin{proposition}\label{prop:resultant}
Suppose $f \in \Integer[q^{\pm 1}]$.
If $f \in (p, q^p -1) \cap (p, q^p +1)$, then $f \in (p, q^{2p} -1)$.
\end{proposition}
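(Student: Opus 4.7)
The plan is to pass to $\overline{R} := \Integer_p[q^{\pm 1}] = R/(p)$ via the reduction map $f \mapsto \bar{f}$ already used in the proof of Proposition~\ref{prop:pqmipqppq2p}, and then to exploit coprimality of $q^p - 1$ and $q^p + 1$ in $\overline{R}$. Under this reduction the hypotheses $f \in (p, q^p - 1)$ and $f \in (p, q^p + 1)$ become the divisibility statements $(q^p - 1) \mid \bar{f}$ and $(q^p + 1) \mid \bar{f}$ in $\overline{R}$, while the desired conclusion $f \in (p, q^{2p} - 1)$ becomes $(q^{2p} - 1) \mid \bar{f}$.

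Next I would establish coprimality. Because $p$ is odd (as is implicit in Section~\ref{sec:congqpp1}), $2$ is a unit in $\Integer_p$, and the identity $(q^p + 1) - (q^p - 1) = 2$ shows that the ideal $(q^p - 1,\ q^p + 1) \subseteq \overline{R}$ contains a unit. Hence $q^p - 1$ and $q^p + 1$ are coprime in the UFD $\Integer_p[q]$, and therefore also in its localization $\overline{R} = \Integer_p[q, q^{-1}]$. In a UFD, an element divisible by two coprime elements is divisible by their product, so $(q^p - 1)(q^p + 1) = q^{2p} - 1$ divides $\bar{f}$. Writing $\bar{f} = (q^{2p} - 1)\bar{G}$, lifting $\bar{G}$ to some $G \in R$, and observing that $f - (q^{2p} - 1) G$ lies in $\ker(R \to \overline{R}) = (p)$, one concludes $f \in (p, q^{2p} - 1)$.

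There is no serious obstacle: the argument is a routine commutative-algebra exercise. The only points worth being careful about are that $\overline{R}$ is a Laurent polynomial ring (harmless, since localization preserves UFD-ness and coprimality of the given elements), and the implicit use of $p$ odd, which is exactly what makes $2$ invertible modulo $p$ and thus drives the whole coprimality argument.
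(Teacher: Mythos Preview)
Your proof is correct and follows essentially the same approach as the paper's: reduce modulo $p$ to $\Integer_p[q^{\pm 1}]$, establish that $q^p - 1$ and $q^p + 1$ are coprime there, and conclude that $\bar{f}$ is divisible by their product $q^{2p} - 1$. The only difference is cosmetic: the paper verifies coprimality via the resultant $\mathrm{Res}(q^p - 1, q^p + 1) = \bar{2} \neq \bar{0}$ in $\Integer_p$, whereas you use the more direct B\'ezout-type identity $(q^p + 1) - (q^p - 1) = 2$.
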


\begin{proof}
Again consider $\bar{f} \in \Integer_p[q^{\pm 1}]$.
Then $\bar{f} \in (\bar{1}q^p -\bar{1}) \cap (\bar{1}q^p +\bar{1})$.
Since the resultant $\mathrm{Res}(\bar{1}q^p -\bar{1}, \bar{1}q^p +\bar{1}) = \bar{2}$ is not zero, there is no common root of $\bar{1}q^p -\bar{1}$ and $\bar{1}q^p +\bar{1}$ in an algebraic closure of $\Integer_p$.
Thus $\bar{f} \in (\bar{1}q^{2p} -\bar{1})$ which implies that $f \in (p, q^{2p} -1)$, as desired.
\end{proof}



\begin{remark}\label{rem:difference}
We do not know whether Theorem~\ref{thm:maincongNevenp} is better than Theorem~\ref{thm:maincongm}.
So far, we could not find an example which turns out to be a non-$p$-periodic link using Theorem~\ref{thm:maincongNevenp} but not possibly by Theorem~\ref{thm:maincongm}.
\end{remark}

\section{Criteria for non-periodicity of knots}\label{sec:criforknot}
\noindent Throughout this section $K$ denotes a knot.
The following result is a case $m=1$ for knots in Theorem~\ref{thm:maincongm} and Theorem~\ref{thm:maincongNevenp}.

\begin{theorem}\label{thm:congknot}
Suppose that $K$ is $p$-periodic with diagram $D$ and the axis $\gamma$, and that $\lambda$ is the linking number between $K$ and $\gamma$.
Then we have
\[ \mathcal{P}^{(N)}_K(q) \equiv \sum_{I \in \mathcal{I}_N} q^{\lambda \cdot I} \quad \mbox{ mod } (p, q^p -1) \]
and
\[ \mathcal{P}^{(N)}_K(q) \equiv \pm \sum_{I \in \mathcal{I}_N} q^{\lambda \cdot I} \quad \mbox{ mod } (p, q^p +1) . \]
\end{theorem}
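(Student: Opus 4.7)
The plan is to observe that Theorem~\ref{thm:congknot} is simply the one-component specialization of the two main results of Section~\ref{sec:congqpm1} and Section~\ref{sec:congqpp1}. Since a knot $K = L_1$ has $m = 1$ components, the indexing set $\mathcal{I}_N^m$ appearing in those theorems becomes $\mathcal{I}_N^1$, which is canonically in bijection with $\mathcal{I}_N$ itself via $\varphi \mapsto I := \varphi(1)$. Under this bijection the single-factor product $\prod_{j=1}^{1} q^{\lambda_j \cdot \varphi(j)}$ collapses to the monomial $q^{\lambda \cdot I}$, where $\lambda = \lambda_1$ denotes the linking number of $K$ with the axis $\gamma$.

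With these identifications fixed, I would obtain the first congruence modulo $(p, q^p - 1)$ by applying Theorem~\ref{thm:maincongm} directly with $L = K$ and $m = 1$. The second congruence modulo $(p, q^p + 1)$ follows by the same substitution into Theorem~\ref{thm:maincongNevenp}; note that the latter requires $p$ to be an odd prime, a hypothesis implicitly inherited by Theorem~\ref{thm:congknot} for its second assertion. No further computation or geometric argument is needed.

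The main difficulty --- if one insists on calling anything here difficult --- is purely bookkeeping: one needs only to verify that the sum indexed by $\varphi \in \mathcal{I}_N^1$ in the source theorems becomes the sum $\sum_{I \in \mathcal{I}_N}$ as written, and that the hypotheses of the source theorems (periodicity of $L$, a fixed diagram and axis) are exactly what we assume here for $K$. Both checks are immediate, so Theorem~\ref{thm:congknot} is essentially a corollary of the two preceding theorems, stated separately as a convenient starting point for the non-periodicity criteria for knots developed in the rest of this section.
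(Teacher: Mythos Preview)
Your proposal is correct and matches the paper's own treatment exactly: the paper does not give a separate proof of Theorem~\ref{thm:congknot} but simply introduces it as ``a case $m=1$ for knots in Theorem~\ref{thm:maincongm} and Theorem~\ref{thm:maincongNevenp}.'' Your bookkeeping about the bijection $\mathcal{I}_N^1 \leftrightarrow \mathcal{I}_N$ and the collapse of the product to $q^{\lambda \cdot I}$ is precisely the trivial unpacking needed, and nothing more is required.
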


\begin{corollary}\label{cor:criknot}
If, for any integer $k \in \{ 0, \ldots, p-1 \}$,
\[ \mathcal{P}^{(N)}_K(q) \not\equiv \sum_{I \in \mathcal{I}_N} q^{k\cdot I} \quad \mbox{ mod } (p,q^p -1), \]
then $K$ is not $p$-periodic. 
If, for any integer $k \in \{ 0, \ldots, 2p-1 \}$,
\[ \mathcal{P}^{(N)}_K(q) \not\equiv \pm \sum_{I \in \mathcal{I}_N} q^{k\cdot I} \quad \mbox{ mod } (p,q^p +1), \]
then $K$ is not $p$-periodic. 
\end{corollary}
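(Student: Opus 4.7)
The strategy is simply the contrapositive of Theorem~\ref{thm:congknot}, combined with Lemma~\ref{lem:exp} to reduce the unknown linking number $\lambda$ to a canonical representative in the specified range. Assume $K$ is $p$-periodic with rotation axis $\gamma$, and let $\lambda$ be the linking number of $K$ with $\gamma$. Theorem~\ref{thm:congknot} then delivers two congruences whose right-hand sides depend on $\lambda$ only through the exponents $\lambda \cdot I$ for $I \in \mathcal{I}_N$, and these exponents need only be determined up to a modulus compatible with the ideal of $q$-reduction.

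For the first claim, I would pick $k \in \{0,\ldots,p-1\}$ with $k \equiv \lambda \mbox{ mod } p$. Then $\lambda \cdot I \equiv k \cdot I \mbox{ mod } p$ for every $I \in \mathcal{I}_N$, so Lemma~\ref{lem:exp} gives $q^{\lambda \cdot I} \equiv q^{k \cdot I} \mbox{ mod } (q^p -1)$. Summing over $I$ and composing with the first congruence of Theorem~\ref{thm:congknot} yields
\[ \mathcal{P}^{(N)}_K(q) \equiv \sum_{I \in \mathcal{I}_N} q^{k \cdot I} \quad \mbox{ mod } (p, q^p - 1), \]
contradicting the hypothesis; so $K$ is not $p$-periodic.

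For the second claim, one needs the analogous reduction modulo $2p$: if $i \equiv j \mbox{ mod } 2p$, then $q^i \equiv q^j \mbox{ mod } (q^{2p}-1)$ by the same telescoping argument used for Lemma~\ref{lem:exp}, and hence $q^i \equiv q^j \mbox{ mod } (q^p +1)$ since $(q^p +1) \mid (q^{2p}-1)$. Choose $k \in \{0,\ldots,2p-1\}$ with $k \equiv \lambda \mbox{ mod } 2p$; then $\lambda \cdot I \equiv k \cdot I \mbox{ mod } 2p$, so $q^{\lambda \cdot I} \equiv q^{k \cdot I} \mbox{ mod } (q^p + 1)$, and the second congruence of Theorem~\ref{thm:congknot} forces $\mathcal{P}^{(N)}_K(q) \equiv \pm \sum_{I \in \mathcal{I}_N} q^{k \cdot I} \mbox{ mod } (p, q^p + 1)$, again a contradiction.

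Since the entire argument is a one-line contraposition once the exponent-reduction step is in place, there is no substantive obstacle; the only point requiring care is matching the range of $k$ to the appropriate modulus (namely $p$ against $q^p -1$ versus $2p$ against $q^p +1$), which is precisely what dictates the asymmetric bookkeeping in the two parts of the corollary.
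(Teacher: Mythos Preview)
Your argument is correct and is exactly the contrapositive reading the paper intends: the corollary is stated without proof immediately after Theorem~\ref{thm:congknot}, and your reduction of $\lambda$ modulo $p$ (respectively $2p$) via Lemma~\ref{lem:exp} and the divisibility $(q^p+1)\mid(q^{2p}-1)$ is precisely the bookkeeping that makes the contraposition work.
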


It is sometimes useful to combine our criteria with existing results like the next corollary suggests.

\begin{corollary}\label{cor:criknot2}
Suppose that we already have a set $\mathcal{C}$ of possible candidate for linking number.
If
\[ \mathcal{P}^{(N)}_K(q) \not\equiv \sum_{I \in \mathcal{I}_N} q^{k\cdot I} \:\: \mbox{ mod } (p,q^p -1) \quad \mbox{ for all } k \in \mathcal{C} \]
or
\[ \mathcal{P}^{(N)}_K(q) \not\equiv \pm \sum_{I \in \mathcal{I}_N} q^{k\cdot I} \:\: \mbox{ mod } (p,q^p +1) \quad \mbox{ for all } k \in \mathcal{C},\]
then $K$ is not $p$-periodic.
\end{corollary}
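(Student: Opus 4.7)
The plan is an almost immediate contrapositive of Theorem~\ref{thm:congknot}. Suppose, for contradiction, that $K$ is $p$-periodic with axis $\gamma$, and let $\lambda$ denote the linking number of $K$ with $\gamma$. By the standing assumption that $\mathcal{C}$ is a set of candidates for this linking number, we have $\lambda \in \mathcal{C}$, possibly after reduction modulo $p$ (or modulo $2p$) as discussed below. Theorem~\ref{thm:congknot} then yields both
\[ \mathcal{P}^{(N)}_K(q) \equiv \sum_{I \in \mathcal{I}_N} q^{\lambda \cdot I} \quad \mbox{ mod } (p, q^p - 1) \]
and
\[ \mathcal{P}^{(N)}_K(q) \equiv \pm \sum_{I \in \mathcal{I}_N} q^{\lambda \cdot I} \quad \mbox{ mod } (p, q^p + 1). \]

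Specializing $k := \lambda \in \mathcal{C}$ in either of the two hypotheses of the corollary therefore produces a direct contradiction; the contrapositive gives the claim.

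The only mild point to address is that the sums $\sum_{I \in \mathcal{I}_N} q^{k\cdot I}$ depend on $k$ only through its residue modulo $p$ for the $(p, q^p - 1)$ statement (by Lemma~\ref{lem:exp}), and only modulo $2p$ for the $(p, q^p + 1)$ statement (by the same argument applied to $q^{2p} - 1 = (q^p-1)(q^p+1)$). Consequently one may assume without loss of generality that $\mathcal{C}$ is contained in $\{0, \ldots, p-1\}$ or $\{0, \ldots, 2p-1\}$ respectively, and in particular the residue of $\lambda$ under the action of the candidate-set hypothesis will match some element of $\mathcal{C}$. Beyond this observation there is no genuine obstacle: the corollary is simply Corollary~\ref{cor:criknot} in which the exhaustive search over residues is replaced by a restricted search over the narrower candidate set that extra information (e.g.\ from Murasugi's Theorem~\ref{thm:Murasugi} or Traczyk's Theorem~\ref{thm:Traczyk}) may provide.
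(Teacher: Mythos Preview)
Your proof is correct and matches the paper's approach: the corollary is stated without proof in the paper, being an immediate contrapositive of Theorem~\ref{thm:congknot} once one knows that the actual linking number $\lambda$ must lie in the candidate set $\mathcal{C}$. Your additional remark about reducing $k$ modulo $p$ (respectively $2p$) is a helpful clarification but not essential to the argument.
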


Fix an odd prime $p$.
Then, by reducing modulo $(p, q^p -1)$, any Laurent polynomial $f = \sum_{i} a_i q^i  \in \Integer [q^{\pm 1}]$ gives rise to a unique representative which we refer to as the ``\textit{normal form}'' of $f$ with respect to $(p, q^p -1)$:
\[ f \equiv \sum_{-p/2 < j < p/2 } b_j q^j \quad \mbox{ mod } (p, q^p -1),\]
where $b_j \in \{ 0,1, \ldots, p-1 \}$ is congruent to $\sum_{i \in j + p\Integer} a_i$ modulo $p$ for each $j$ with $-p/2 < j < p/2$.
When we say two Laurent polynomials in $\Integer [q^{\pm 1}]$ are congruent modulo $(p,q^p-1)$, we mean both the normal forms coincide.

We have the following observation on lower bounds of $p$ for $p$-periodicity.

\begin{corollary}\label{cor:lowerbound}
Suppose that $\mathcal{P}^{(N)}_K(q)$ is not equal to $\sum_{I \in \mathcal{I}_N} q^{k\cdot I}$ for all $k \in \Natural$.
Then there is $n \in \Natural$ such that $K$ is not $p$-periodic for all odd prime $p \geq n$.
\end{corollary}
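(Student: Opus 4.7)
The plan is to argue by contrapositive. Suppose $K$ is $p$-periodic for infinitely many odd primes $p$; the goal is to force $\mathcal{P}^{(N)}_K(q)=\sum_{I\in\mathcal{I}_N}q^{k\cdot I}$ for some $k\in\Natural$, contradicting the hypothesis. For each such $p$, Corollary~\ref{cor:criknot} produces $k_p\in\{0,\ldots,p-1\}$ with $\mathcal{P}^{(N)}_K(q)\equiv\sum_I q^{k_p\cdot I}\ \mbox{ mod }(p,q^p-1)$, and since $\mathcal{I}_N=-\mathcal{I}_N$ we have $\sum_I q^{kI}\equiv\sum_I q^{(p-k)I}\ \mbox{ mod }(q^p-1)$, so we may replace $k_p$ by $p-k_p$ if necessary to assume $k_p\leq(p-1)/2$.

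Let $M=\max\{|j|:j\in\mathrm{supp}(\mathcal{P}^{(N)}_K)\}$. For $p>2M$ the normal form of the left hand side is just $\mathcal{P}^{(N)}_K(q)$ with coefficients reduced mod $p$ (no wraparound), so the congruence forces the centered residue of $k_p\cdot I$ modulo $p$ to lie in $[-M,M]$ for every $I\in\mathcal{I}_N$. If $N$ is even, applying this to $I=1\in\mathcal{I}_N$ gives $k_p\in[0,M]$ at once. If $N$ is odd (so $N\geq 3$), applying it to $I=2$ yields either $k_p\in[0,M/2]$, or else the single-wraparound regime $k_p=(p-c)/2$ for some odd integer $c\in[1,2M/(N-1)]$. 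In the bounded case, a pigeonhole over the finitely many possible values produces a common $k_0$ realized by infinitely many $p$; since the two sides of $\mathcal{P}^{(N)}_K(q)\equiv\sum_I q^{k_0\cdot I}\ \mbox{ mod }p$ both have fixed finite support inside $(-p/2,p/2)$, the congruence upgrades to an equality of Laurent polynomials, which is the desired contradiction.

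The main obstacle is the wraparound subcase for $N$ odd. Here I would compute the normal form of $\sum_I q^{k_p I}$ directly: writing $I=2J$ for $J\in\{-(N-1)/2,\ldots,(N-1)/2\}$ and $k_p=(p-c)/2$ gives $k_pI=pJ-cJ\equiv-cJ\ \mbox{ mod }p$, so the normal form stabilizes to the $p$-independent polynomial $\sum_J q^{cJ}$; pigeonholing over the finitely many admissible odd $c$ and repeating the support reasoning forces $\mathcal{P}^{(N)}_K(q)=\sum_J q^{cJ}$. I would then kill this case by a parity observation: since $r+s\in 2\Integer$ for every monomial $c\,a^rz^s$ of $P_K(a,z)$, the substitutions $a=q^{-N}$ and $z=q-q^{-1}$ combined with $N$ odd show via \eqref{eq:defofqi} that every exponent of $\mathcal{P}^{(N)}_K(q)$ is even, whereas $\sum_J q^{cJ}$ with odd $c$ and $N\geq 3$ contains the odd exponent $q^c$ (take $J=1$). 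This parity mismatch yields the final contradiction.
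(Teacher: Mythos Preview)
Your argument is correct, and it is actually more complete than the paper's. The paper proceeds directly: it sets $n := 1+\max\bigl(\mathcal{P}\cup\{2\deg \mathcal{P}^{(N)}_K(q)\}\bigr)$ (where $\mathcal{P}$ is the set of prime divisors of the coefficients), observes that for $p\ge n$ the polynomial $\mathcal{P}^{(N)}_K(q)$ is already in normal form, and then asserts that the congruence to $\sum_I q^{kI}$ must fail ``by the assumption.'' That last step tacitly presumes that the normal form of $\sum_I q^{kI}$ modulo $(p,q^p-1)$ is again of the shape $\sum_I q^{k'I}$ for some $k'\in\Natural$, which is exactly the wraparound issue you isolate: when $N$ is odd and $k_p=(p-c)/2$ with $c$ odd, the normal form is $\sum_J q^{cJ}$ and is \emph{not} of that shape. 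Your contrapositive-plus-pigeonhole scheme handles this cleanly, and the exponent-parity observation you use to kill the odd-$c$ case (which the paper records in its parity table but does not invoke in this proof) is precisely what is needed to close the gap. The trade-off is that the paper's bound on $n$ is explicit but its justification is incomplete, whereas your bound is existential but fully argued.

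One small point of polish: in the bounded case you should take $p$ larger than $2M(N-1)$, not just $2M$, before asserting that both sides have support inside $(-p/2,p/2)$ and upgrading the congruence to an equality; similarly for the computation of the normal form $\sum_J q^{cJ}$ in the wraparound case you first need $c\le M$ (from $I=2$) and $p>M(N-1)$ to rule out further wrapping, and only then deduce $c\le 2M/(N-1)$. Since you are working with infinitely many primes this costs nothing, but it is worth saying explicitly.
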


\begin{proof}
Let $\mathcal{P}$ be the set of all prime divisors of coefficients of $\mathcal{P}^{(N)}_K(q)$.
Set $n := 1+ \max \big( \mathcal{P} \cup \{ 2\deg \mathcal{P}^{(N)}_K(q) \} \big)$.
If $p \geq n$, then $\mathcal{P}^{(N)}_K(q)$ itself is its normal form.
Thus, $\mathcal{P}^{(N)}_K(q)$ cannot be congruent to $\sum_{I \in \mathcal{I}_N} q^{k\cdot I}$ for some $k$ after modulo $(p, q^p - 1)$ by the assumption.
\end{proof}

We next talk about how Theorem~\ref{thm:congknot} itself provides the possible values of linking number.
Suppose $k$ is an integer such that $\abs{\{ k\cdot I \mbox{ mod } p \, : \, I \in \mathcal{I}_N \}} = \abs{\mathcal{I}_N} = N$.
For such $k$, the number of terms in the normal form of $\sum_{I \in \mathcal{I}_N} q^{k \cdot I}$ is $N$, and we address the question which integer $k'$ with $1\leq k' \leq p-1$ satisfies the following identity of sets:
\begin{equation}\label{eq:expch}
\{ k' \cdot I \mbox{ mod } p \, : \, I \in \mathcal{I}_N \} = \{ k \cdot I \mbox{ mod } p \, : \, I \in \mathcal{I}_N \} .
\end{equation}
Important cases where we can easily determine such $k'$ are the following.

\begin{proposition}\label{prop:poss23}
Suppose $N \in \{ 2,3 \}$ and $k \in \Integer$ such that the number of terms in the normal form of $\sum_{I \in \mathcal{I}_N} q^{k \cdot I}$ is $N$.
Then, there are exactly two choices for $k' \in \Integer$ with $1 \leq k' \leq p-1$ satisfying Eq.~\eqref{eq:expch}.
In particular, if $k'$ and $k''$ are such two choices, then $\{ \pm k' \mbox{ mod } p \} = \{ \pm k'' \mbox{ mod } p \}$.
\end{proposition}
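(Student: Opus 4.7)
The plan is to handle the two cases $N = 2$ and $N = 3$ separately, exploiting the small size and the central symmetry $\mathcal{I}_N = -\mathcal{I}_N$. For each integer $k$, set $S(k) := \{ k \cdot I \bmod p \, : \, I \in \mathcal{I}_N \} \subseteq \Integer / p \Integer$; this set is automatically closed under negation, and the hypothesis of the proposition is precisely that $|S(k)| = N$. The goal is to prove that $S(k') = S(k)$ forces $k' \equiv \pm k \pmod{p}$, after which it only remains to check that the two resulting representatives of $\pm k \bmod p$ in $\{1, \ldots, p-1\}$ are distinct.

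For $N = 2$ we have $\mathcal{I}_2 = \{-1, 1\}$, so $S(k) = \{k, -k\} \bmod p$. The hypothesis $|S(k)| = 2$ forces $2k \not\equiv 0 \pmod{p}$, hence $k \not\equiv 0 \pmod{p}$ (since $p$ is odd). The set equation $\{k', -k'\} = \{k, -k\} \pmod{p}$ then yields $k' \equiv \pm k \pmod{p}$, and these two values give distinct elements of $\{1, \ldots, p-1\}$ precisely because $k \not\equiv -k \pmod{p}$.

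For $N = 3$ we have $\mathcal{I}_3 = \{-2, 0, 2\}$ and $S(k) = \{-2k, 0, 2k\} \bmod p$; the hypothesis $|S(k)| = 3$ again forces $k \not\equiv 0 \pmod{p}$. After removing the common element $0$, the identity $S(k') = S(k)$ reduces to $\{2k', -2k'\} = \{2k, -2k\} \pmod{p}$, so $2k' \equiv \pm 2k \pmod{p}$; multiplying by the inverse of $2$ modulo $p$ (which exists since $p$ is odd) gives $k' \equiv \pm k \pmod{p}$. The distinctness of the two solutions in $\{1, \ldots, p-1\}$ follows exactly as in the previous case, and the final assertion $\{ \pm k' \bmod p \} = \{ \pm k'' \bmod p \}$ is then immediate, since both sets coincide with $\{k, -k\} \bmod p$.

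I do not anticipate any real obstacle in carrying this out: the whole argument is forced by the very small cardinality of $\mathcal{I}_N$ together with its central symmetry, and in both cases the problem reduces to a simple equality of a two-element symmetric subset of $\Integer / p \Integer$, which admits at most two multiplicative preimages. The only point requiring any care is the initial verification that the hypothesis on the number of terms of the normal form is equivalent to $k \not\equiv 0 \pmod{p}$, which is exactly what ensures that $k \bmod p$ and $-k \bmod p$ are two distinct elements of $\{1, \ldots, p-1\}$.
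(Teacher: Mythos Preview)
Your proposal is correct and follows essentially the same approach as the paper: both arguments compute $\{k\cdot I : I\in\mathcal{I}_N\}$ explicitly as $\{\pm k\}$ (for $N=2$) or $\{0,\pm 2k\}$ (for $N=3$), observe that the hypothesis forces $\gcd(k,p)=1$, and conclude that the only solutions are $k'\equiv\pm k\pmod p$. Your write-up is in fact more detailed than the paper's, which simply asserts that ``another choice must be $p-k'$'' without spelling out the set-equality argument or the use of the inverse of $2$ in the $N=3$ case.
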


\begin{proof}
If $N = 2$ (resp. $N = 3$), then the set $\{ k \cdot I \, : \,  I \in \mathcal{I}_N\}$ is equal to $\{ \pm k \}$ (resp. $\{ 0, \pm 2k \}$).
Since $\mathrm{gcd}(k,p) = 1$ by the assumption, we can uniquely determine $k' \in \Integer$ with $1 \leq k' \leq p-1$ such that $k' \equiv k \mbox{ mod } p$.
Clearly, another choice must be $p - k'$.
\end{proof}

Let $K$ be a knot which we suspect being $p$-periodic.
Suppose we already know $\mathcal{P}^{(N)}_K(q)$ for every $N \in \Natural_{\geq 2}$.
Theoretically, for each $N \in \Natural_{\geq 2}$, we can judge whether or not the congruence $\mathcal{P}^{(N)}_K(q) \equiv \sum_{i\in \mathcal{I}_N} q^{k_N \cdot i} \mbox{ mod } (p,q^p-1)$ holds for some integer $k_N$ depending on $N$ with $0 \leq k_N \leq p-1$.
(Here, the choice for $k_N$ may not be unique.)
Whenever this congruence holds for all elements of a fixed subset $S \subseteq \Natural_{\geq 2}$, we can check further if there is an integer $k$ such that $\{ \pm k \mbox{ mod } p \} = \{\pm k_N \mbox{ mod } p\}$ for all $N \in S$ with some choice of $k_N$, in which case we call $\pm k \mbox{ mod } p$ the ``\textit{possible linking number of $K$ with respect to $S\,$}''.
(Note that the plus-minus sign for $\pm k \mbox{ mod } p$ corresponds to an orientation of $K$.)
On the other hand, if there exists no such $k_N$ for some $N \in \Natural_{\geq 2}$, or there is a subset $S \subseteq \Natural_{\geq 2}$ such that the sets $\{ \pm k_N \mbox{ mod } p\}$ are not the same for all $N \in S$ with some $k_N$, then we can conclude that $K$ is \emph{not} periodic.
By Proposition~\ref{prop:poss23} we often do this procedure with $S = \{ 2,3 \}$.
Of course, the larger the size of $S$ becomes, the more reliable possible linking number we get.

\begin{remark}\label{rem:obtainlk}
There may be many ways to obtain a set $\mathcal{C}$ of possible linking numbers (in order to apply Corollary~\ref{cor:criknot2}).
We can find $\mathcal{C}$ from the previous argument, or from other theorems such as Theorem~\ref{thm:Traczyk}.

Suppose that one method gives $\mathcal{C}$ and the other method gives $\mathcal{C}'$ with $\mathcal{C} \cap \mathcal{C}' = \emptyset$.
In this case we can say that $K$ is not $p$-periodic.
\end{remark}

\end{document}